\newtheorem{theorem}{Theorem}
\newtheorem{corollary}{Corollary}
\newtheorem{lemma}{Lemma}
\newtheorem{proposition}{Proposition}
\newtheorem{defi}{Definition}
\newenvironment{proof}{ \noindent \textit{Proof:} }
{\hfill$\Box$\endtrivlist}
\begin{document}

\begin{frontmatter}

\title{On the existence of Free Models in Fuzzy Universal Horn Classes}

\runningtitle{On the existence of Free Models in Fuzzy Universal Horn Classes}

\runningauthor{Costa and Dellunde}

\author[A,B]{\fnms{Vicent} \snm{Costa}},
\author[A,B,C]{\fnms{Pilar} \snm{Dellunde}}

\address[A]{Universitat Aut\`onoma de Barcelona\\
}

\vspace{-0.4cm}

\address[B]{Artificial Intelligence Research Institute (IIIA - CSIC)\\
Campus UAB, 08193 Bellaterra, Catalonia (Spain) \\
}

\vspace{-0.4cm}

\address[C]{Barcelona Graduate School of Mathematics\\
\email{vicent@iiia.csic.es}\\
\email{pilar.dellunde@uab.cat}\\}

\begin{abstract}
This paper is a contribution to the study of the universal Horn fragment of predicate fuzzy logics, focusing on some relevant notions in logic programming. We introduce the notion of \emph{term structure associated to a set of formulas} in the fuzzy context and we show the existence of free models in fuzzy universal Horn classes.  We prove that every equality-free consistent universal Horn fuzzy theory has a Herbrand model. 
\end{abstract}

\begin{keyword}  Horn clause \sep Free model  \sep Herbrand structure \sep  Predicate Fuzzy Logics.

\end{keyword}

\end{frontmatter}

\maketitle

\section{Introduction}
\label{Introduction} 

Since their introduction in \cite{Kin43}, Horn clauses have shown to have good logic properties and have proven to be of importance for many disciplines, ranging from logic programming, abstract specification of data structures and relational data bases, to abstract algebra and model theory. However, the analysis of Horn clauses has been mainly restricted to the sphere of classical logic. For a good exposition of the most relevant results concerning Horn clauses in classical logic we refer to \cite{Hod93}, and to \cite{Mak87} for a good study of their importance in computer science. 

The interest in continuous t-norm based logics since its systematization by H\'ajek  \cite{Ha98} and the subsequent study of core fuzzy logics \cite{CiHa10} invite to a systematic development of a model theory of these logics (and of algebraizable non-classical logics in general). Cintula and H\'ajek raised the open question of characterizing theories of Horn clauses in predicate fuzzy logics \cite{CiHa10}. Our first motivation to study the Horn fragment of predicate fuzzy logics was to solve this open problem, the present article is a first contribution towards its solution.

Some authors have contributed to the study of Horn clauses over fuzzy logic. In \cite{Be02,Be03,BeVy05,BeVic06,BeVic06b,Vy15} B\v{e}lohl\'avek and Vychodil study fuzzy equalities, they work with theories that consist of formulas that are implications between identities with premises weighted by truth
degrees. They adopt Pavelka style: theories are fuzzy sets of formulas and they consider degrees
of provability of formulas from theories. Their basic structure of truth degrees is a complete
residuated lattice. The authors derive a Pavelka-style completeness theorem (degree of provability
equals degree of truth) from which they get some particular cases by imposing restrictions
on the formulas under consideration. As a particular case, they obtain completeness of fuzzy
equational logic. In different articles they study the main logical properties of varieties of algebras with fuzzy equalities. Taking a different approach, in a series of papers \cite{Ge01b, Ge01,Ge05}, Gerla proposes to base fuzzy control on fuzzy logic programming, and observes that the class of fuzzy Herbrand interpretations gives a semantics for fuzzy programs. Gerla works with a complete, completely distributive, lattice of truth-values. For a reference on fuzzy logic programming see \cite{Voj01, Ebra01}.

Several definitions of Horn clause have been proposed in the literature of fuzzy logics, but there is not a canonical one yet. Cintula and H\'ajek affirm that the elegant approach of \cite{BeVic06} is not the only possible one. In \cite{DuPra96}, Dubois and Prade discuss different possibilities of defining \emph{fuzzy rules} and they show how these different semantics can be captured in the framework of fuzzy set theory and possibility theory. Following all these works, our contribution is a first step towards a systematic model-theoretic account of Horn clauses in the framework introduced by H\'ajek in \cite{Ha98}. We introduce a basic definition of Horn clause over the predicate fuzzy logic MTL$\forall^m$ that extends the classical one in a natural way. In future work we will explore different generalizations of our definitions for expanded languages. Our approach differs from the one of B\v{e}lohl\'avek and Vychodil because we do not restrict to fuzzy equalities. Another difference is that, unlike these authors and Gerla, our structures are not necessarily over the same complete algebra, because we work in the general semantics of \cite{Ha98}.

In the present work we have focused on the study of \emph{free models of Horn clauses}. Free structures have a relevant role in classical model theory and logic programming. Admitting free structures make reasonable the concepts of \emph{closed-word assumption} for databases and \emph{negation as failure} for logic programming. These structures allow also a procedural interpretation for logic programs (for a reference see \cite{Mak87}). Free structures of a given class are minimal from an algebraic point of view, in the sense that there is a unique homomorphism from these structures to any other structure in the class. The free structures introduced here are \emph{term structures}, structures whose domains consist of terms or equivalence classes of terms of the language. In classical logic, term structures have been used to prove the satisfiability of a set of consistent sentences, see for instance \cite[Ch.5]{EbiFlu94}. Notorious examples of term structures are Herbrand models, they play an important function in the foundations of logic programming. Several authors have been studied Herbrand models in the fuzzy context (for a reference see \cite{Ge05,Voj01,Ebra01}), providing theoretical background for different classes of fuzzy expert systems. For a general reference on Herbrand Theorems for substructural logics we refer to \cite{CiMet13}.%Petr Cintula, George Metcalfe: Herbrand Theorems for Substructural Logics. LPAR 2013: 584-600}

\smallskip

The present paper is an extension of the work presented in the 18th International Conference of the Catalan Association for Artificial Intelligence (CCIA 2015) \cite{CoDe15}. Our main original contributions are the following:

\begin{itemize}
  \item Introduction of the notion of term structure associated to a theory over predicate fuzzy logics. If the theory consist of universal Horn formulas, we show that the associated term structure is a model of the theory (Theorem 2).
  \item Existence of free models in fuzzy universal Horn classes of structures. In the case that the language has an equality symbol $\approx$ interpreted as a similarity, we prove the existence of models which are free in the class of reduced models of the theory (Theorem 1). In the case that the language has the crisp identity, the class has free models in the usual sense.
 \item Consistent universal Horn theories over predicate fuzzy logics (that contains only the truth-constants $\overline{1}$ and $\overline{0}$) have classical models (Corollary \ref{classic}).
  \item Introduction of Herbrand structures. We prove that every equality-free consistent universal Horn theory over predicate fuzzy logics have a Herbrand model (Corollary \ref{corollary H-model}).
\end{itemize}

The paper is organized as follows. Section 2 contains the preliminaries on predicate fuzzy logics. In Section 3 we introduce the definition of Horn clause over predicate fuzzy logics. In Section 4 we study the term structures associated to universal Horn theories. In Section 5 we introduce Herbrand structures for equality-free theories. Finally, there is a section devoted to conclusions and future work.

\section{Preliminaires}
\label{Preliminaires}

Our study of the model theory of Horn clauses is focused on the basic predicate fuzzy logic MTL$\forall^m$ and some of its extensions based on propositional core fuzzy logics in the sense of \cite{CiHa10}. The logic MTL$\forall^m$ is the predicate extension of the left-continuous t-norm based logic MTL introduced in \cite{EsGo01}, where MTL-algebras are defined as bounded integral commutative residuated lattices $(A,\sqcap,\sqcup,*,\Rightarrow,0,1)$, where $\sqcap$ and $\sqcup$ are respectively the lattice meet and join operations and $(\Rightarrow,*)$ is a residuated pair, satisfying the pre-linearity equation $(x\Rightarrow y)\sqcup(y\Rightarrow x)=1$ (for an exhaustive exposition of MTL-algebras, see \cite{NoEsGis05}). In addition, completeness of this logic with respect to MTL-algebras is proven in \cite[Th.1]{EsGo01}, and Jenei and Montagna shown that MTL is the logic of all left continuous t-norms and their residua \cite{JeMon02}. Now we present the syntax and semantics of predicate fuzzy logics and we refer to \cite[Ch.1]{CiHaNo11} for a complete and extensive presentation.

\begin{defi} [Syntax of Predicate Languages]
 A \emph{predicate language} $\mathcal{P}$ is a  triple $\left\langle Pred_{\mathcal{P}},Func_{\mathcal{P}},Ar_{\mathcal{P}} \right\rangle$, where $Pred_{\mathcal{P}}$ is a nonempty set of \emph{predicate symbols}, $Func_{\mathcal{P}}$ is a set of \emph{function symbols} (disjoint from $Pred_{\mathcal{P}}$), and $Ar_{\mathcal{P}}$ represents the \emph{arity function}, which assigns a natural number to each predicate symbol or function symbol. We call this natural number the \emph{arity of the symbol}. The predicate symbols with arity zero are called \emph{truth constants}, while the function symbols whose arity is zero are named \emph{individual constants} (\emph{constants} for short) or \emph{objects}. 
\end{defi}

The set of $\mathcal{P}$-terms, $\mathcal{P}$-formulas and the notions of free occurrence of a variable, open formula, substitutability and sentence are defined as in classical predicate logic. From now on, when it is clear from the context, we will refer to $\mathcal{P}$-terms and $\mathcal{P}$-formulas simply as \emph{terms} and \emph{formulas}. A term $t$ is \emph{ground} if it has no variables. Throughout the paper we consider the equality symbol as a binary predicate symbol, not as a logical symbol, that is, the equality symbol is not necessarily present in all the languages and its interpretation is not fixed. From now on, let $L$ be a core fuzzy logic in a propositional language $\mathcal{L}$ that contains only the truth-constants $\overline{1}$ and $\overline{0}$ (for an extended study of core fuzzy logics, see \cite{CiHa10}).

\begin{defi} We introduce an axiomatic system for the predicate logic $L\forall^m$:  
\begin{description}
 
 \item[($\mathrm{P}$)]$\space\space$ $\space\space$ $\space\space$ Instances of the axioms of $L$ (the propositional variables are substituted for first-order formulas). 
 
 \item[($\forall 1$)]$\space\space$ $(\forall x)\varphi(x)\rightarrow\varphi(t)$, where the term $t$ is substitutable for $x$ in $\varphi$.
 
 \item[($\exists1$)]$\space\space$ $\varphi(t)\rightarrow(\exists x)\varphi(x)$, where the term $t$ is substitutable for $x$ in $\varphi$.
 
 \item[($\forall 2$)]$\space\space$ $(\forall x)(\xi\rightarrow\varphi)\rightarrow(\xi\rightarrow(\forall x)\varphi(x))$, where $x$ is not free in $\xi$.
 
 \item[($\exists2$)]$\space\space$ $(\forall x)(\varphi\rightarrow\xi)\rightarrow((\exists x)\varphi\rightarrow\xi)$, where $x$ is not free in $\xi$.
 
\end{description}

The deduction rules of $L\forall^m$ are those of $L$ and the rule of generalization: from $\varphi$ infer $(\forall x)\varphi$. The definitions of proof and provability are analogous to the classical ones. We denote by $\Phi\vdash_{L\forall^m}\varphi$ the fact that $\varphi$ is provable in $L\forall^m$ from the set of formulas $\Phi$. For the sake of clarity, when it is clear from the context we will write $\vdash$ to refer to $\vdash_{L\forall^m}$. A set of formulas $\Phi$ is \emph{consistent} if $\Phi\not\vdash\overline{0}$.
 \end{defi}

\begin{defi} [\textbf{Semantics of Predicate Fuzzy Logics}] \label{evaluation} Consider a predicate language $\mathcal{P}=\langle Pred_{\mathcal{P}}, Func_{\mathcal{P}}, Ar_{\mathcal{P}} \rangle$ and let \textbf{A} be an $L$-algebra. We define an $\textbf{A}$\emph{-structure} $\mathrm{\mathbf{M}}$ for $\mathcal{P}$ as the triple $\langle M, (P_M)_{P\in Pred}, (F_M)_{F\in Func} \rangle$, where $M$ is a nonempty domain, $P_{\mathrm{\mathbf{M}}}$ is an $n$-ary fuzzy relation for each $n$-ary predicate symbol, i.e., a function from $M^n$ to $\textbf{A}$, identified with an element of $\textbf{A}$ if $n=0$; and $F_{\mathrm{\mathbf{M}}}$ is a function from $M^n$ to $M$, identified with an element of $M$ if $n=0$. As usual, if $\mathrm{\mathbf{M}}$ is an $\textbf{A}$-structure for $\mathcal{P}$, an $\mathrm{\mathbf{M}}$-evaluation of the object variables is a mapping $v$ assigning to each object variable an element of $M$. The set of all object variables is denoted by $Var$. If $v$ is an $\mathrm{\mathbf{M}}$-evaluation, $x$ is an object variable and $a\in M$, we denote by $v[x\mapsto a]$ the $\mathrm{\mathbf{M}}$-evaluation so that $v[x\mapsto a](x)=a$ and $v[x\mapsto a](y)=v(y)$ for $y$ an object variable such that $y\not=x$. If $\mathrm{\mathbf{M}}$ is an $\textbf{A}$-structure and $v$ is an $\mathrm{\mathbf{M}}$-evaluation, we define the \emph{values} of terms and the \emph{truth values} of formulas in $M$ for an evaluation $v$ recursively as follows:
 
\begin{description}

\item $||x||^{\small{\textbf{A}}}_{\mathrm{\mathbf{M}},v}=v(x)$;

\item $||F(t_1,\ldots,t_n)||^{\small{\textbf{A}}}_{\mathrm{\mathbf{M}},v}=F_{\mathrm{\mathbf{M}}}(||t_1||^{\small{\textbf{A}}}_{\mathrm{\mathbf{M}},v},\ldots,||t_n||^{\small{\textbf{A}}}_{\mathrm{\mathbf{M}},v})$, for $F\in Func$;

\item $||P(t_1,\ldots,t_n)||^{\small{\textbf{A}}}_{\mathrm{\mathbf{M}},v}=P_{\mathrm{\mathbf{M}}}(||t_1||^{\small{\textbf{A}}}_{\mathrm{\mathbf{M}},v},\ldots,||t_n||^{\small{\textbf{A}}}_{\mathrm{\mathbf{M}},v})$, for $P\in Pred$;

\item $||c(\varphi_1,\ldots,\varphi_n)||^{\small{\textbf{A}}}_{\mathrm{\mathbf{M}},v}=c_{\textbf{A}}(||\varphi_1||^{\small{\textbf{A}}}_{\mathrm{\mathbf{M}},v},\ldots,||\varphi_n||^{\small{\textbf{A}}}_{\mathrm{\mathbf{M}},v})$, for $c\in\mathcal{L}$;

\item $||(\forall x)\varphi||^{\small{\textbf{A}}}_{\mathrm{\mathbf{M}},v}=inf\{||\varphi||^{\small{\textbf{A}}}_{\mathrm{\mathbf{M}},v[x\rightarrow a]}\mid a\in M\}$;

\item $||(\exists x)\varphi||^{\small{\textbf{A}}}_{\mathrm{\mathbf{M}},v}=sup\{||\varphi||^{\small{\textbf{A}}}_{\mathrm{\mathbf{M}},v[x\rightarrow a]}\mid a\in M\}$.

\end{description}
If the infimum or the supremum do not exist, we take the truth value of the formula as undefined. We say that an $\textbf{A}$-structure is \emph{safe} if $||\varphi||^{\small{\textbf{A}}}_{\mathrm{\mathbf{M}},v}$ is defined for each formula $\varphi$ and each $\mathrm{\mathbf{M}}$-evaluation $v$. \end{defi}

 \noindent For a set of formulas $\Phi$, we write $||\Phi||^{\emph{\textbf{A}}}_{\mathrm{\mathbf{M}},v}=1$ if $||\varphi||^{\emph{\textbf{A}}}_{\mathrm{\mathbf{M}},v}=1$ for every $\varphi\in\Phi$. We say that $\langle\emph{\textbf{A}},\mathrm{\mathbf{M}}\rangle$ is a \emph{model of a set of formulas $\Phi$} if $||\varphi ||^{\emph{\textbf{A}}}_{\mathrm{\mathbf{M}},v}=1$ for any $\varphi\in\Phi$ and any \textbf{M}-evaluation $v$. We denote by $||\varphi||^{\emph{\textbf{A}}}_{\textbf{M}}=1$ that $||\varphi||^{\emph{\textbf{A}}}_{\textbf{M},v}=1$ for all \textbf{M}-evaluation $v$. We say that a formula $\varphi$ is \emph{satisfiable} if there exists a structure $\langle\emph{\textbf{{A}}},\textbf{M}\rangle$ such that $||\varphi||^{\emph{\textbf{A}}}_{\textbf{M}}=1$. In such case, we also say that $\varphi$ is \emph{satisfied by} $\langle\emph{\textbf{{A}}}, \textbf{M}\rangle$ or that $\langle\emph{\textbf{{A}}},\textbf{M}\rangle$ \emph{satisfies $\varphi$}.  Unless otherwise stated, from now on \emph{\textbf{A}} denotes an MTL-algebra and we refer to \emph{\textbf{A}}-structures simply as \emph{structures}. \smallskip

Now we recall the notion of homomorphism between fuzzy structures.

\begin{defi} {\em \textbf{\cite[Definition 6]{DeGaNo14}}} \label{def:mapping structures}
$\space$ Let $\langle\textbf{A},\mathrm{\mathbf{M}}\rangle$ and $\langle\textbf{B},\mathrm{\mathbf{N}}\rangle$ be structures, $f$ be a mapping from $\textbf{A}$ to $\textbf{B}$ and $g$ be a mapping from $M$ to $N$. The pair $\langle f,g\rangle$ is said to be a \emph{homomorphism} from $\langle\textbf{A},\mathrm{\mathbf{M}}\rangle$ to $\langle\textbf{B},\mathrm{\mathbf{N}}\rangle$ if $f$ is a homomorphism of ${L}$-algebras and for every $n$-ary function symbol $F$ and $d_1,\ldots,d_n\in M$,
$$g(F_{\mathrm{\mathbf{M}}}(d_1,\ldots,d_n))=F_{\mathrm{\mathbf{N}}}(g(d_1),\ldots,g(d_n)) $$

\noindent and for every $n$-ary predicate symbol $P$ and $d_1,\ldots,d_n\in M$,  
$$  \text{ \emph{(*) }} \text{If }P_{\mathrm{\mathbf{M}}}(d_1,\ldots,d_n)=1 \text{, then } P_{\mathrm{\mathbf{N}}}(g(d_1),\ldots,g(d_n))=1.$$
We say that a homomorphism $\langle f,g\rangle$ is \emph{strict} if instead of \emph{(*)} it satisfies the stronger condition: for every $n$-ary predicate symbol $P$ and $d_1,\ldots,d_n\in M$,  
$$P_{\mathrm{\mathbf{M}}}(d_1,\ldots,d_n)=1 \text{ if and only if } P_{\mathrm{\mathbf{N}}}(g(d_1),\ldots,g(d_n))=1.$$

\noindent Moreover we say that $\langle f,g\rangle$ is an \emph{embedding} if it is a strict homomorphism and both functions $f$ and $g$ are injective. And we say that an embedding $\langle f,g\rangle$ is an \emph{isomorphism} if both functions $f$ and $g$ are surjective.

\end{defi}  

\section{Horn clauses}  
\label{Horn clauses}

In this section we present a definition of Horn clause over predicate fuzzy logics that extends the classical definition in a natural way. In classical predicate logic, a \emph{basic Horn formula} is a formula of the form $ \alpha_{1}\wedge\dotsb \wedge\alpha_{n}\rightarrow\beta$, where $n\in\mathbb{N}$ and $\alpha_1,\ldots,\alpha_n,\beta$ are atomic formulas. Now we extend these definitions to work with predicate fuzzy logics. Observe that there is not a unique way to extend them due to the fact that, in predicate fuzzy logic, we have different conjunctions and implications. 

\begin{defi}[Basic Horn Formula]\label{strong basic}A \emph{basic Horn formula} is a formula of the form \begin{equation} \label{1}
 \alpha_1\&\dotsb\&\alpha_n\rightarrow\beta \hfill 
\end{equation} 
where $n\in\mathbb{N}$, $\alpha_1,\ldots,\alpha_n, \beta$ are atomic formulas. 
\end{defi} 

The formula obtained by substitution in expression (\ref{1}) of the strong conjunction $\&$ by the weak conjunction $\wedge$  will be called \emph{basic weak Horn formula}. From now on, for the sake of clarity, we will refer to the basic weak Horn formulas as \emph{basic w-Horn formulas}. 

Analogously to classical logic, disjunctive definitions of basic Horn formulas can be defined. Nevertheless, it is an easy exercise to check that, for predicate fuzzy logics, these disjunctive forms are not in general equivalent to the implicational ones that we have introduced here. Here we focus our analysis on the implicational Horn clauses and we leave for future work the study of the properties of disjunctive Horn clauses.

\begin{defi}
\label{qf Horn} A \emph{quantifier-free Horn formula} is a formula of the form \newline $\phi_1\&\dotsb\&\phi_m$ where $m\in\mathbb{N}$ and $\phi_i$ is a basic Horn formula for every $1\leq i\leq m$. If $\phi_i$ is a basic w-Horn formula for every $1\leq i\leq m$, we say that $\phi_1\wedge\dotsb\wedge\phi_m$ is a \emph{quantifier-free w-Horn formula}.
\end{defi}
From now on, whenever it is possible, we present a unique definition for both the strong and the weak version, we use the \emph{w-} symbol into parenthesis.

\begin{defi}\label{Horn}A \emph{(w-)Horn formula} is a formula of the form $Q\gamma$, where $Q$ is a (possibly empty) string of quantifiers $(\forall x),(\exists x)$... and $\gamma$ is a quantifier-free (w-)Horn formula. A \emph{(w-)Horn clause} (or \emph{universal (w-)Horn formula}) is a (w-)Horn formula in which the quantifier prefix (if any) has only universal quantifiers. A \emph{(w-)universal Horn theory} is a set of (w-)Horn clauses. 
\end{defi}

Observe that, in classical logic, the formula $(\forall x)\varphi \wedge (\forall x) \psi$ is logically equivalent to $(\forall x)(\varphi \wedge \psi).$ This result can be used to prove that every Horn clause is equivalent in classical logic to a conjunction of formulas of the form $(\forall x_1) \ldots (\forall x_k)\varphi$, where $\varphi$ is a basic Horn formula. Having in mind these equivalences, it is easy to see that the set of all Horn clauses is recursively defined in classical logic by the following rules: \begin{itemize}

\item[1.] If $\varphi$  is a basic Horn formula, then $\varphi$ is a Horn clause;

\item[2.] If $\varphi$ and $\psi$ are Horn clauses, then $\varphi\wedge\psi$ is a Horn clause;

\item[3.]  If $\varphi$  is a Horn clause, then $(\forall x)\varphi$ is a Horn clause.

\end{itemize}

In MTL$\forall^m$ we can deduce $(\forall x)\varphi \wedge (\forall x) \psi\leftrightarrow(\forall x)(\varphi \wedge \psi)$. This fact allows us to show that in MTL$\forall^m$, any w-Horn clause is equivalent to a weak conjunction of formulas of the form $(\forall x_1)\dotsb(\forall x_k)(\varphi)$ where $\varphi$ is a basic w-Horn formula. Thus, w-Horn clauses can be recursively defined in MTL$\forall^m$ as above. But it is not the case for the strong conjunction since $(\forall x)\varphi \& (\forall x) \psi\leftrightarrow(\forall x)(\varphi \& \psi)$ can not be deduced from MTL$\forall^m$ (we refer to \cite[Remark p.281]{EsGo01}). So the set of Horn clauses is not recursively defined in MTL$\forall^m$.

\section{Term structures associated to a set of formulas} 
\label{Term structure associated to a set of formulas}

In this section we introduce the notion of term structure associated to a set of formulas over predicate fuzzy logics. We study the particular case of sets of universal Horn formulas and prove that the term structure associated to these sets of formulas is free. Term structures have been used in classical logic to prove the satisfiability of a set of consistent sentences, see for instance \cite[Ch.5]{EbiFlu94}. From now on we assume that we work in a language with a binary predicate symbol $\approx$ interpreted as a similarity. We assume also that the axiomatization of the logic $L\forall^m$ contains the following axioms for $\approx$.

\begin{defi} $\emph{\textbf{ \cite[Definitions 5.6.1, 5.6.5]{Ha98}}}$ \label{def similarity} 
Let $\approx$ be a binary predicate symbol, the following are the axioms of similarity and congruence: 
\begin{itemize}

\item[S1.] $(\forall x)x\approx x$ 

\item[S2.] $(\forall x)(\forall y)(x\approx y\rightarrow y\approx x$) 

\item[S3.] $(\forall x)(\forall y)(\forall z)(x\approx y \& y\approx z\rightarrow x\approx z)$  \end{itemize} 
\begin{itemize}
\item[C1.]  For each $n$-ary function symbol  $F$, \end{itemize} {\footnotesize
$(\forall x_1)\dotsb(\forall x_n)(\forall y_1)\dotsb(\forall y_n)(x_1\approx y_1\&\dotsb \& x_n\approx y_n\rightarrow F(x_1,\ldots,x_n)\approx F(y_1,\ldots,y_n))$
}
\begin{itemize}
\item[C2.]  For each $n$-ary predicate symbol  $P$, \end{itemize} {\footnotesize 
$(\forall x_1)\dotsb(\forall x_n)(\forall y_1)\dotsb(\forall y_n)(x_1\approx y_1\&\dotsb \& x_n\approx y_n\rightarrow (P(x_1, \ldots, x_n)\leftrightarrow P(y_1,\ldots, y_n)))$ }
\end{defi}

\begin{defi}\label{relacio}   
Let $\Phi$ be a set of formulas, we define a binary relation on the set of terms, denoted by $\sim$, in the following way: for every terms $t_1,t_2$,
\begin{center}
$t_1\sim t_2$ if and only if $\Phi\vdash t_1\approx t_2$. 
\end{center}
\end{defi}

By using \cite[Prop.1(5)]{EsGo01}, it is easy to check that for every set of formulas $\Phi$, $\sim$ is an equivalence relation. From now on we denote by $\overline{t}$ the $\sim$-class of the term $t$. The next result, which states that $\sim$ is compatible with the symbols of the language, can be easily proven using the Axioms of Congruence of Definition \ref{def similarity}.

\begin{lemma} \label{f} Let $\Phi$ be a set of formulas. The relation $\sim$ has the following property: if for every $1\leq i\leq n$, $t_i\sim t'_i$, then 

  \begin{itemize}
 \item[(i)] For any $n$-ary function symbol $F$, $F(t_1,\ldots,t_n)\sim F(t'_1,\ldots,t'_n)$,  

\item[(ii)] For any $n$-ary predicate symbol $P$, \small{
$\Phi\vdash P(t_1,\ldots, t_n)$ \text{iff} $\Phi\vdash P(t'_1, \ldots, t'_n)$}
\end{itemize}
\end{lemma}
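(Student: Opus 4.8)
The plan is to prove both items by induction-free direct arguments that reduce each claim to the Axioms of Congruence C1 and C2 together with the defining property of $\sim$. Recall that by Definition \ref{relacio}, the hypothesis $t_i \sim t'_i$ means exactly that $\Phi \vdash t_i \approx t'_i$ for each $1 \leq i \leq n$. The whole proof rests on being able to feed these provable similarities into the congruence axioms and discharge the antecedent of the resulting implication.

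\medskip

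\noindent First I would treat item (i). Fix an $n$-ary function symbol $F$ and assume $\Phi \vdash t_i \approx t'_i$ for every $i$. The axiom C1 gives, after instantiating the universally quantified variables $x_i$ by $t_i$ and $y_i$ by $t'_i$ (which is legitimate by $(\forall 1)$, assuming as usual the terms are substitutable), the provable formula
$$
t_1 \approx t'_1 \,\&\dotsb\&\, t_n \approx t'_n \rightarrow F(t_1,\ldots,t_n) \approx F(t'_1,\ldots,t'_n).
$$
Since each conjunct $t_i \approx t'_i$ is provable from $\Phi$, I would assemble them into a proof of the strong conjunction $t_1 \approx t'_1 \,\&\dotsb\&\, t_n \approx t'_n$ from $\Phi$; this uses the adjunction/necessitation behaviour of $\&$ available in any core fuzzy logic (for instance the rule deriving $\varphi \& \psi$ from $\varphi$ and $\psi$, which holds because $\Phi \vdash \varphi$ and $\Phi \vdash \psi$ yield $\Phi \vdash \varphi \& \psi$ in MTL$\forall^m$). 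Then modus ponens with the instance of C1 yields $\Phi \vdash F(t_1,\ldots,t_n) \approx F(t'_1,\ldots,t'_n)$, which is precisely $F(t_1,\ldots,t_n) \sim F(t'_1,\ldots,t'_n)$ by Definition \ref{relacio}.

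\medskip

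\noindent For item (ii) the argument is parallel but uses C2 and is symmetric in the two directions of the biconditional. Instantiating C2 at the same terms gives
$$
t_1 \approx t'_1 \,\&\dotsb\&\, t_n \approx t'_n \rightarrow \bigl(P(t_1,\ldots,t_n) \leftrightarrow P(t'_1,\ldots,t'_n)\bigr),
$$
and, exactly as above, $\Phi$ proves the antecedent, so by modus ponens $\Phi \vdash P(t_1,\ldots,t_n) \leftrightarrow P(t'_1,\ldots,t'_n)$. From a provable biconditional together with $\Phi \vdash P(t_1,\ldots,t_n)$ one obtains $\Phi \vdash P(t'_1,\ldots,t'_n)$ by modus ponens on the left-to-right implication, and the converse direction is symmetric; hence $\Phi \vdash P(t_1,\ldots,t_n)$ iff $\Phi \vdash P(t'_1,\ldots,t'_n)$, as required.

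\medskip

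\noindent I expect the only genuinely delicate point to be the passage from the individual provabilities $\Phi \vdash t_i \approx t'_i$ to the provability of their strong conjunction $t_1 \approx t'_1 \,\&\dotsb\&\, t_n \approx t'_n$, so that the antecedent of the instantiated congruence axiom can be discharged by a single application of modus ponens. This is where one must invoke the specific provability properties of $\&$ in MTL$\forall^m$ rather than reasoning purely semantically, and it is the reason the statement is phrased in terms of $\Phi \vdash \cdots$ and cited to \cite[Prop.1(5)]{EsGo01} in the surrounding discussion. Everything else is routine instantiation via $(\forall 1)$ and propagation of truth through modus ponens.
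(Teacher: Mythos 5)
Your proof is correct and follows exactly the route the paper intends: the paper omits the argument, remarking only that the lemma ``can be easily proven using the Axioms of Congruence of Definition \ref{def similarity}'', and your instantiation of C1 and C2 via $(\forall 1)$, assembly of the antecedent by the provable adjunction $\varphi\rightarrow(\psi\rightarrow\varphi\&\psi)$, and discharge by modus ponens is precisely that argument spelled out. (Minor aside: the citation to \cite[Prop.1(5)]{EsGo01} in the surrounding text supports the claim that $\sim$ is an equivalence relation, not this lemma, but that does not affect your proof.)
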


\smallskip

\begin{defi}  [Term Structure] \label{structure}
Let $\Phi$ be a consistent set of formulas. We define the following structure $\langle\textbf{B},\mathrm{\mathbf{T}}^{\Phi}\rangle$, where $\textbf{B}$ is the two-valued Boolean algebra, $\mathrm{\mathbf{T}}^{\Phi}$ is the set of all equivalence classes of the relation $\sim$ and 
\begin{itemize}

\item For any $n$-ary function symbol $F$, 
$$F_{\mathrm{\mathbf{T}}^{\Phi}}(\overline{t}_1,\ldots,\overline{t}_n)=\overline{F(t_1,\ldots,t_n)}$$

\item For any $n$-ary predicate symbol $P$, 
 $$ P_{\mathrm{\mathbf{T}}^{\Phi}}(\overline{t}_1,\ldots,\overline{t}_n)=\begin{cases} 1, & \mbox{if } \Phi\vdash P(t_1,\ldots, t_n) \\ 0, & \mbox{otherwise } \end{cases} $$ 

\end{itemize}
We call $\langle\textbf{B},\mathrm{\mathbf{T}}^{\Phi}\rangle$ the \emph{term structure associated to $\Phi$}. 
\end{defi}

Notice that for every $0$-ary function symbol $c$, $c_{\mathrm{\mathbf{T}}^{\Phi}}=\overline{c}$. By using Lemma \ref{f}, it is easy to prove that the structure $\langle\emph{\textbf{B}},\mathrm{\mathbf{T}}^{\Phi}\rangle$ is well-defined, because the conditions are independent from the choice of the representatives. Observe that, so defined, $\langle\emph{\textbf{B}},\mathrm{\mathbf{T}}^{\Phi}\rangle$ is a classical structure. The following lemma agrees with this classical character. 
\begin{lemma} \label{crisp}
Let $\Phi$ be a consistent set of formulas. The interpretation of the $\approx$ symbol in the structure $\langle\textbf{B},\mathrm{\mathbf{T}}^{\Phi}\rangle$ is the crisp equality. 
\end{lemma}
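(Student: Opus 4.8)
The plan is to show that the fuzzy relation $\approx_{\mathrm{\mathbf{T}}^{\Phi}}$ takes only the values $0$ and $1$, and that it assigns $1$ exactly to pairs of identical elements of the domain $\mathrm{\mathbf{T}}^{\Phi}$. Since $\langle\textbf{B},\mathrm{\mathbf{T}}^{\Phi}\rangle$ is already a structure over the two-valued Boolean algebra $\textbf{B}$, every predicate, including $\approx$, is interpreted as a function into $\{0,1\}$, so the fuzziness is automatically gone; the only thing to verify is that the pattern of $1$'s coincides with the diagonal $\{(\overline{t},\overline{t}) : \overline{t}\in\mathrm{\mathbf{T}}^{\Phi}\}$. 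By the definition of the term structure, for any terms $t_1,t_2$ we have $\approx_{\mathrm{\mathbf{T}}^{\Phi}}(\overline{t}_1,\overline{t}_2)=1$ if and only if $\Phi\vdash t_1\approx t_2$, and $0$ otherwise.

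First I would unfold Definition \ref{relacio}: by construction, $t_1\sim t_2$ holds precisely when $\Phi\vdash t_1\approx t_2$. Therefore $\approx_{\mathrm{\mathbf{T}}^{\Phi}}(\overline{t}_1,\overline{t}_2)=1$ is equivalent to $t_1\sim t_2$. But $t_1\sim t_2$ means exactly that $t_1$ and $t_2$ lie in the same $\sim$-class, i.e.\ $\overline{t}_1=\overline{t}_2$. Hence $\approx_{\mathrm{\mathbf{T}}^{\Phi}}(\overline{t}_1,\overline{t}_2)=1$ if and only if $\overline{t}_1=\overline{t}_2$, which is precisely the statement that $\approx$ is interpreted as the crisp (classical) equality on the domain $\mathrm{\mathbf{T}}^{\Phi}$.

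One subtlety worth flagging is well-definedness: I must make sure that the equivalence $\approx_{\mathrm{\mathbf{T}}^{\Phi}}(\overline{t}_1,\overline{t}_2)=1 \iff \Phi\vdash t_1\approx t_2$ does not depend on the chosen representatives $t_1,t_2$ of the classes. This is guaranteed by Lemma \ref{f}(ii) applied to the binary predicate $\approx$: if $t_1\sim t_1'$ and $t_2\sim t_2'$, then $\Phi\vdash t_1\approx t_2$ iff $\Phi\vdash t_1'\approx t_2'$, so the value the structure assigns is independent of representatives, exactly as already noted after Definition \ref{structure}. With this in hand the two implications of the biconditional close immediately.

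I expect no real obstacle here; the content of the lemma is essentially a bookkeeping consequence of the way $\sim$ and the term structure were set up, together with the fact that $\textbf{B}$ is two-valued. The only place demanding a word of care is confirming that $\sim$ really is an equivalence relation so that the $\sim$-classes are genuine equivalence classes and ``$t_1\sim t_2 \iff \overline{t}_1=\overline{t}_2$'' is legitimate; this was already observed in the text using \cite[Prop.1(5)]{EsGo01} (reflexivity, symmetry, and transitivity coming from axioms S1, S2, S3 of Definition \ref{def similarity}). Invoking that observation, the proof reduces to the chain of equivalences above.
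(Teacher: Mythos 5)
Your proof is correct and follows essentially the same route as the paper's: the same chain of equivalences $\approx_{\mathrm{\mathbf{T}}^{\Phi}}(\overline{t}_1,\overline{t}_2)=1$ iff $\Phi\vdash t_1\approx t_2$ iff $t_1\sim t_2$ iff $\overline{t}_1=\overline{t}_2$, unwinding Definitions \ref{relacio} and \ref{structure}. The extra remarks on well-definedness and on $\sim$ being an equivalence relation are sound but already covered in the paper's surrounding text.
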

\begin{proof}
Let $t_1,t_2$ be terms. We have $\overline{t}_1=\overline{t}_2$ iff $t_1\sim t_2$ iff $\Phi\vdash t_1\approx t_2$ iff 
$\overline{t_1}\approx_{\mathrm{\mathbf{T}}^{\Phi}} \overline{t_2}$ (this last step by Definition \ref{structure}). \end{proof}

\bigskip

Now we prove some technical lemmas that will allow us to show that the term structrure $\langle\emph{\textbf{B}},\mathrm{\mathbf{T}}^{\Phi}\rangle$ is free.

\begin{defi} \label{canonical}
Given a consistent set of formulas $\Phi$, let $e^{\Phi}$ be the following $\emph{\textbf{T}}^{\Phi}$-evaluation: $e^{\Phi}(x)=\overline{x}$. We call $e^{\Phi}$ the \emph{canonical evaluation of} $\langle\textbf{B},\mathrm{\mathbf{T}}^{\Phi}\rangle$. 
\end{defi}

\begin{lemma} \label{terms and atomic formulas}
 Let $\Phi$ be a consistent set of formulas, the following holds:
 \begin{itemize}
\item[(i)] For any term $t$, $|| t ||^{\textbf{B}}_{\mathrm{\mathbf{T}}^{\Phi},e^{\Phi}}=\overline{t}$.

\item[(ii)] For any atomic formula $\varphi$, $|| \varphi||^{\textbf{B}}_{\mathrm{\mathbf{T}}^{\Phi},e^{\Phi}}=1$ if and only if $\Phi\vdash\varphi$.

\item[(iii)] For any atomic formula $\varphi$, $|| \varphi||^{\textbf{B}}_{\mathrm{\mathbf{T}}^{\Phi},e^{\Phi}}=0$ if and only if $\Phi\not \vdash\varphi$.
\end{itemize}
\end{lemma}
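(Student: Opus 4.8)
The plan is to establish the three items in order: (i) by structural induction on terms, (ii) as a direct computation resting on (i), and (iii) as an immediate consequence of (ii). Everything reduces to unwinding the recursive definition of the semantics against the definition of the term structure; the only points demanding care are the arity-zero cases and the single spot where consistency of $\Phi$ enters. Throughout I rely on the fact, already noted after Definition~\ref{structure} via Lemma~\ref{f}, that $\langle\textbf{B},\mathrm{\mathbf{T}}^{\Phi}\rangle$ is well-defined, so that all the expressions below depend only on the $\sim$-classes and not on chosen representatives.

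First I would prove (i) by induction on the complexity of $t$. For the base cases: if $t$ is a variable $x$, then by the semantic clause for variables and the definition of the canonical evaluation, $||x||^{\textbf{B}}_{\mathrm{\mathbf{T}}^{\Phi},e^{\Phi}}=e^{\Phi}(x)=\overline{x}$; and if $t$ is an individual constant $c$ (a $0$-ary function symbol), then $||c||^{\textbf{B}}_{\mathrm{\mathbf{T}}^{\Phi},e^{\Phi}}=c_{\mathrm{\mathbf{T}}^{\Phi}}=\overline{c}$, using the remark that $c_{\mathrm{\mathbf{T}}^{\Phi}}=\overline{c}$. For the inductive step, write $t=F(t_1,\ldots,t_n)$; the recursive clause for terms gives $||F(t_1,\ldots,t_n)||^{\textbf{B}}_{\mathrm{\mathbf{T}}^{\Phi},e^{\Phi}}=F_{\mathrm{\mathbf{T}}^{\Phi}}(||t_1||^{\textbf{B}}_{\mathrm{\mathbf{T}}^{\Phi},e^{\Phi}},\ldots,||t_n||^{\textbf{B}}_{\mathrm{\mathbf{T}}^{\Phi},e^{\Phi}})$, and substituting the induction hypothesis $||t_i||^{\textbf{B}}_{\mathrm{\mathbf{T}}^{\Phi},e^{\Phi}}=\overline{t_i}$ followed by the definition of $F_{\mathrm{\mathbf{T}}^{\Phi}}$ yields $F_{\mathrm{\mathbf{T}}^{\Phi}}(\overline{t_1},\ldots,\overline{t_n})=\overline{F(t_1,\ldots,t_n)}=\overline{t}$.

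For (ii), an atomic formula has the form $\varphi=P(t_1,\ldots,t_n)$, where $P$ ranges over the predicate symbols (the similarity $\approx$ being simply a binary predicate symbol, handled by the general case). Applying the semantic clause for atomic formulas and then part (i), I get $||\varphi||^{\textbf{B}}_{\mathrm{\mathbf{T}}^{\Phi},e^{\Phi}}=P_{\mathrm{\mathbf{T}}^{\Phi}}(\overline{t_1},\ldots,\overline{t_n})$, and by the definition of $P_{\mathrm{\mathbf{T}}^{\Phi}}$ this equals $1$ exactly when $\Phi\vdash P(t_1,\ldots,t_n)$, that is, when $\Phi\vdash\varphi$. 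The truth constants are consistent with this description: $\overline{1}$ is provable from any theory and receives value $1$, whereas for $\overline{0}$ it is precisely the consistency hypothesis $\Phi\not\vdash\overline{0}$ that forces the value to be $0$. This is the only place in the lemma where consistency of $\Phi$ is actually used.

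Finally, (iii) follows at once from (ii) together with the fact that $\textbf{B}$ is the two-valued Boolean algebra: since $||\varphi||^{\textbf{B}}_{\mathrm{\mathbf{T}}^{\Phi},e^{\Phi}}\in\{0,1\}$, the value is $0$ if and only if it is not $1$, which by (ii) happens if and only if $\Phi\not\vdash\varphi$. I do not expect a genuine obstacle anywhere in this argument, as each step is definitional; the closest thing to a pitfall is simply being careful with the arity-zero cases (constants in (i), truth constants in (ii)) and with appealing to the already-established well-definedness of $\langle\textbf{B},\mathrm{\mathbf{T}}^{\Phi}\rangle$ so that the inductive computation is legitimate.
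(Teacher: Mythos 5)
Your proposal is correct and follows essentially the same route as the paper: (i) by induction on term complexity using Definitions \ref{structure} and \ref{canonical}, (ii) by unwinding the atomic semantic clause through (i) and the definition of $P_{\mathrm{\mathbf{T}}^{\Phi}}$, and (iii) from the two-valuedness of $\textbf{B}$. Your extra remark locating where consistency enters (the truth constant $\overline{0}$) is a sound addition that the paper leaves implicit.
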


\begin{proof}
(i) By induction on the complexity of $t$ and Definitions \ref{structure} and \ref{canonical}. \newline
(ii) Let $P$ be an $n$-ary predicate symbol and $t_1,\ldots,t_n$ be terms, we have:

\medskip$\begin{array}{rr} ||P(t_1,\ldots,t_n)||^{\emph{\textbf{B}}}_{\mathrm{\mathbf{T}}^{\Phi},e^{\Phi}}=1 & \text{iff} 
\\[2ex]  P_{\mathrm{\mathbf{T}}^{\Phi}}(|| t_1 ||^{\emph{\textbf{B}}}_{\mathrm{\mathbf{T}}^{\Phi},e^{\Phi}},\ldots,|| t_n ||^{\emph{\textbf{B}}}_{\mathrm{\mathbf{T}}^{\Phi},e^{\Phi}})=1 & \text{iff}

\\[2ex]  P_{\mathrm{\mathbf{T}}^{\Phi}}(\overline{t}_1,\ldots,\overline{t}_n)=1 & \text{iff}
\\[2ex] \Phi\vdash P(t_1,\ldots, t_n) & \end{array}$

\medskip \noindent The second equivalence is by (i) of the present Lemma, and the third one by Definition \ref{structure}. (iii) holds because $\langle\emph{\textbf{B}},\mathrm{\mathbf{T}}^{\Phi}\rangle$ is a classical structure. \end{proof}

\bigskip
Observe that, since terms are the smallest significance components of a first-order language, Lemma \ref{terms and atomic formulas} (ii) and (iii) can be read as saying that term structures are minimal with respect to atomic formulas. Intuitively speaking, the term structure picks up the positive atomic information associated to $\Phi$. 

\smallskip

\begin{lemma} \label{generates} Let $\Phi$ be a consistent set of formulas. The set $\{\overline{x}\mid x\in Var\}$ generates the universe $T^{\Phi}$ of the term structure associated to $\Phi$.  
\end{lemma}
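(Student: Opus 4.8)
The plan is to show that every element of the universe $T^{\Phi}$ can be obtained from the generators $\{\overline{x}\mid x\in Var\}$ by finitely many applications of the function symbols interpreted in the term structure. Recall that $T^{\Phi}$ consists of the $\sim$-equivalence classes $\overline{t}$ of terms $t$, so it suffices to prove that for every term $t$, the class $\overline{t}$ lies in the subalgebra generated by $\{\overline{x}\mid x\in Var\}$ under the operations $F_{\mathrm{\mathbf{T}}^{\Phi}}$. I would state precisely what ``generates'' means here: the set $X=\{\overline{x}\mid x\in Var\}$ generates $T^{\Phi}$ if the smallest subset of $T^{\Phi}$ containing $X$ and closed under all $F_{\mathrm{\mathbf{T}}^{\Phi}}$ (for $F\in Func$, including the constants, which are nullary) equals $T^{\Phi}$.

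The proof proceeds by induction on the complexity (structure) of the term $t$. For the base cases, if $t$ is a variable $x$, then $\overline{t}=\overline{x}\in X$ immediately; if $t$ is an individual constant $c$, then by the remark following Definition \ref{structure} we have $c_{\mathrm{\mathbf{T}}^{\Phi}}=\overline{c}$, so $\overline{t}=\overline{c}$ is the value of the nullary operation $c_{\mathrm{\mathbf{T}}^{\Phi}}$ and hence lies in the generated subalgebra. For the inductive step, suppose $t=F(t_1,\ldots,t_n)$ where $F$ is an $n$-ary function symbol and each $t_i$ is a term of smaller complexity. By the induction hypothesis, each $\overline{t}_i$ belongs to the subalgebra generated by $X$. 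Then, by the definition of the interpretation of function symbols in Definition \ref{structure}, we have
\begin{equation}
\overline{t}=\overline{F(t_1,\ldots,t_n)}=F_{\mathrm{\mathbf{T}}^{\Phi}}(\overline{t}_1,\ldots,\overline{t}_n),
\end{equation}
which exhibits $\overline{t}$ as the image under $F_{\mathrm{\mathbf{T}}^{\Phi}}$ of elements already in the generated subalgebra; hence $\overline{t}$ is in it as well. Since every element of $T^{\Phi}$ is of the form $\overline{t}$ for some term $t$, this completes the induction and shows that $X$ generates $T^{\Phi}$.

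I do not expect any serious obstacle here; the statement is essentially definitional, reflecting the fact that the domain of the term structure is built syntactically out of terms, which themselves are generated from variables and constants by the function symbols. The only point requiring a little care is that the induction must be carried out on representatives: one picks a representative term $t$ for the class $\overline{t}$ and the well-definedness of $F_{\mathrm{\mathbf{T}}^{\Phi}}$ (already established via Lemma \ref{f} in the discussion after Definition \ref{structure}) guarantees the argument does not depend on this choice. Thus the mild subtlety is to invoke well-definedness to pass between classes and representatives cleanly, but no computation beyond the structural induction is needed.
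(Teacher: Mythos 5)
Your proof is correct and follows essentially the same route as the paper: both arguments come down to the identity $\overline{F(t_1,\ldots,t_n)}=F_{\mathrm{\mathbf{T}}^{\Phi}}(\overline{t}_1,\ldots,\overline{t}_n)$ iterated over the structure of a term. The only cosmetic difference is that the paper packages the induction into Lemma \ref{terms and atomic formulas}(i) via the canonical evaluation $e^{\Phi}$ and then reads off $\overline{t}=t_{\mathrm{\mathbf{T}}^{\Phi}}(\overline{x}_1,\ldots,\overline{x}_n)$ from the term semantics, whereas you carry out the structural induction explicitly; your remark on well-definedness via Lemma \ref{f} is a sound extra precaution.
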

\begin{proof}
Let $\overline{t(x_1,\ldots,x_n)}\in T^{\Phi}$. By Lemma \ref{terms and atomic formulas}, $$\overline{t(x_1,\ldots,x_n)}=||t(x_1,\ldots,x_n) ||^{\emph{\textbf{B}}}_{\mathrm{\mathbf{T}}^{\Phi},e^{\Phi}}$$ and by the semantics of predicate fuzzy logics (Definition \ref{evaluation}), \begin{center}

$||t(x_1,\ldots,x_n) ||^{\emph{\textbf{B}}}_{\mathrm{\mathbf{T}}^{\Phi},e^{\Phi}}=t_{T^{\Phi}}(||x_1||^{\emph{\textbf{B}}}_{\mathrm{\mathbf{T}}^{\Phi},e^{\Phi}},\ldots,||x_n ||^{\emph{\textbf{B}}}_{\mathrm{\mathbf{T}}^{\Phi},e^{\Phi}})=t_{\mathrm{\mathbf{T}}^{\Phi}}(\overline{x}_1,\ldots,\overline{x}_n)$. \end{center}  \end{proof} 

\bigskip

Term structures do not necessarily satisfy the theory to which they are associated. In classical logic, if it is the case, from an algebraic point of view, the minimality of the term structure is revealed by the fact that the structure is \emph{free}. A model of a theory is free if there is a unique homomorphism from this model to any other model of the theory. Free structures have their origin in category theory, as a generalization of free groups (for a definition of free structure in category theory, see \cite[Def. 4.7.17]{BaWe98}). Free structures are also named \emph{initial} in \cite[Def. 2.1 (i)]{Mak87}. In the context of computer science, they appeared for the first time in \cite{GoThWaWr75}. 

The possibility given by fuzzy logic of defining the term structure associated to a theory using the similarity symbol $\approx$ leads us to a notion of free structure restricted to the class of reduced models of that theory, as we will prove in next theorem. Remember that \emph{reduced structures} are those whose Leibniz congruence is the identity. By \cite[Lemma 20]{De12}, a structure $\langle\emph{\textbf{A}},\mathrm{\mathbf{M}}\rangle$ is reduced iff it has the \emph{equality property} (EQP) (that is, for any $d,e\in M$, 
$d\approx_{\mathrm{\mathbf{M}}}  e$ iff $d=e$). 

\begin{theorem} \label{initial model}
Let $\Phi$ be a consistent set of formulas with $|| \Phi||^{\textbf{B}}_{\mathrm{\mathbf{T}}^{\Phi},e^{\Phi}}=1$. Then, $\langle\textbf{B},\mathrm{\mathbf{T}}^{\Phi}\rangle$ is a free structure in the class of the reduced models of $\Phi$, i.e., for every reduced structure $\langle\textbf{A},\mathrm{\mathbf{M}}\rangle$ and every evaluation $v$ such that $|| \Phi||^{\textbf{A}}_{\mathrm{\mathbf{M}},v}=1$, there is a unique homomorphism $\langle f,g\rangle$ from $\langle\textbf{B},\mathrm{\mathbf{T}}^{\Phi}\rangle$ to $\langle\textbf{A},\mathrm{\mathbf{M}}\rangle$ such that for every $x \in Var$, $g(\overline{x})=v(x)$.
 \end{theorem}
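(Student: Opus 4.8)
The plan is to construct the pair $\langle f,g\rangle$ one component at a time, showing in each case that the homomorphism requirements leave no freedom (which yields uniqueness) and then verifying that the forced choice is genuinely a homomorphism. For the algebra part, since the source algebra $\textbf{B}$ is the two-element Boolean algebra, any homomorphism of $L$-algebras $f\colon\textbf{B}\to\textbf{A}$ must preserve the bounds, so $f(0)=0_{\textbf{A}}$ and $f(1)=1_{\textbf{A}}$; this already determines $f$ uniquely. For existence I would check that $\{0_{\textbf{A}},1_{\textbf{A}}\}$ is closed under the MTL-operations and that their restriction agrees with the operations of $\textbf{B}$ (for instance $0*x=0$ and $0\Rightarrow 0=1\Rightarrow 1=1$), so that this $f$ is indeed a homomorphism of $L$-algebras.

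For $g$, the normalization $g(\overline{x})=v(x)$ on the generators, together with the requirement that $g$ commute with the function symbols, forces its value everywhere. By Lemma \ref{generates}, every element of $T^{\Phi}$ has the form $\overline{t(x_1,\ldots,x_n)}=t_{\mathrm{\mathbf{T}}^{\Phi}}(\overline{x}_1,\ldots,\overline{x}_n)$, and commutation with $t$ yields $g(\overline{t})=t_{\mathrm{\mathbf{M}}}(v(x_1),\ldots,v(x_n))=\| t\|^{\textbf{A}}_{\mathrm{\mathbf{M}},v}$. I would therefore \emph{define} $g(\overline{t})=\| t\|^{\textbf{A}}_{\mathrm{\mathbf{M}},v}$; the same computation simultaneously delivers uniqueness of $g$, and hence of the whole pair once $f$ is pinned down.

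The crux, and the step I expect to be the main obstacle, is the well-definedness of $g$: that $\overline{t_1}=\overline{t_2}$ implies $\| t_1\|^{\textbf{A}}_{\mathrm{\mathbf{M}},v}=\| t_2\|^{\textbf{A}}_{\mathrm{\mathbf{M}},v}$. By Definition \ref{relacio}, $\overline{t_1}=\overline{t_2}$ means $\Phi\vdash t_1\approx t_2$, so by soundness of $L\forall^m$ together with the hypothesis $\| \Phi\|^{\textbf{A}}_{\mathrm{\mathbf{M}},v}=1$ we get $\| t_1\approx t_2\|^{\textbf{A}}_{\mathrm{\mathbf{M}},v}=1$, i.e. $\approx_{\mathrm{\mathbf{M}}}(\| t_1\|^{\textbf{A}}_{\mathrm{\mathbf{M}},v},\| t_2\|^{\textbf{A}}_{\mathrm{\mathbf{M}},v})=1$. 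Here reducedness is exactly what is required: since $\langle\textbf{A},\mathrm{\mathbf{M}}\rangle$ has the equality property, a value $1$ of $\approx_{\mathrm{\mathbf{M}}}$ forces its two arguments to coincide, giving $\| t_1\|^{\textbf{A}}_{\mathrm{\mathbf{M}},v}=\| t_2\|^{\textbf{A}}_{\mathrm{\mathbf{M}},v}$. This is precisely the point at which the restriction to the class of reduced models cannot be dropped.

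It then remains to verify the two conditions of Definition \ref{def:mapping structures}. Commutation with function symbols is immediate from the definition of $F_{\mathrm{\mathbf{T}}^{\Phi}}$ and the term clause of the semantics (Definition \ref{evaluation}), since $g(F_{\mathrm{\mathbf{T}}^{\Phi}}(\overline{t}_1,\ldots,\overline{t}_n))=\| F(t_1,\ldots,t_n)\|^{\textbf{A}}_{\mathrm{\mathbf{M}},v}=F_{\mathrm{\mathbf{M}}}(g(\overline{t}_1),\ldots,g(\overline{t}_n))$. For condition (*), if $P_{\mathrm{\mathbf{T}}^{\Phi}}(\overline{t}_1,\ldots,\overline{t}_n)=1$ then $\Phi\vdash P(t_1,\ldots,t_n)$ by Definition \ref{structure}, and soundness again converts this into $P_{\mathrm{\mathbf{M}}}(g(\overline{t}_1),\ldots,g(\overline{t}_n))=1$. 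Since $g(\overline{x})=v(x)$ holds by construction and uniqueness was secured above, this establishes that $\langle f,g\rangle$ is the required unique homomorphism.
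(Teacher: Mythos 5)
Your proposal is correct and follows essentially the same route as the paper's proof: the same definition $g(\overline{t})=\|t\|^{\textbf{A}}_{\mathrm{\mathbf{M}},v}$, the same use of reducedness (via the equality property) to establish well-definedness, the same verification of the homomorphism conditions, and uniqueness via Lemma \ref{generates}. Your explicit justification that the algebra component $f$ is forced and is indeed an $L$-algebra homomorphism is a slightly more careful treatment of a point the paper dispatches by simply declaring $f$ to be the identity, but it is not a different argument.
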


\begin{proof} Let $\langle\emph{\textbf{A}},\mathrm{\mathbf{M}}\rangle$ be a reduced structure and $v$ an $\textbf{M}$-evaluation such that $|| \Phi||^{\emph{\textbf{A}}}_{\mathrm{\mathbf{M}},v}=1$. Now let $f:\emph{\textbf{B}}\rightarrow\emph{\textbf{A}}$ be the identity and define $g$ by: $g(\overline{t})=|| t ||^{\emph{\textbf{A}}}_{\mathrm{\mathbf{M}},v}$ for every term $t$. We show that $\langle f,g\rangle$ is the desired homomorphism (for the definition of homomorphism see the Preliminaries section, Definition \ref{def:mapping structures}).

First let us check that $g$ is well-defined. Given terms $t_1,t_2$ with $\overline{t}_1=\overline{t}_2$, that is, $t_1\sim t_2$, by Definition \ref{relacio}, $\Phi\vdash t_1\approx t_2$. Then, since $|| \Phi||^{\emph{\textbf{A}}}_{\mathrm{\mathbf{M}},v}=1$, we have $|| t_1\approx t_2||^{\emph{\textbf{A}}}_{\mathrm{\mathbf{M}},v}=1$. But $\langle\emph{\textbf{A}},\mathrm{\mathbf{M}}\rangle$ is reduced, which by \cite[Lemma 20]{De12} is equivalent to have the EQP; therefore $|| t_1 ||^{\emph{\textbf{A}}}_{\mathrm{\mathbf{M}},v}=|| t_2||^{\emph{\textbf{A}}}_{\mathrm{\mathbf{M}},v}$, that is, $g(\overline{t_1})=g(\overline{t_2})$.

Now, let us see that $g$ is a homomorphism. Let $\overline{t}_1,\ldots,\overline{t}_n\in T^{\Phi}$ be terms and $F$ be an $n$-ary function symbol. By Definition \ref{structure}, we have that $$F_{\mathrm{\mathbf{T}}^{\Phi}}(\overline{t}_1,\ldots,\overline{t}_n)=\overline{F(t_1,\ldots,t_n)}$$ and then
$g(F_{\mathrm{\mathbf{T}}^{\Phi}}(\overline{t}_1,\ldots,\overline{t}_n))=g(\overline{F(t_1,\ldots,t_n)})=|| F(t_1,\ldots,t_n) ||^{\emph{\textbf{A}}}_{\mathrm{\mathbf{M}},v}= \newline =F_{\textbf{M}}(|| t_1||^{\emph{\textbf{A}}}_{\mathrm{\mathbf{M}},v},\ldots ,|| t_n ||^{\emph{\textbf{A}}}_{\mathrm{\mathbf{M}},v})= F_{\textbf{M}}(g(\overline{t}_1),\ldots,g(\overline{t}_n))$. 

\bigskip

Let $P$ be an $n$-ary predicate symbol such that  $P_{\mathrm{\mathbf{T}}^{\Phi}}(\overline{t}_1,\ldots,\overline{t}_n)=1$. By Definition \ref{structure}, $\Phi\vdash P(t_1,\ldots,t_n)$. Since $|| \Phi||^{\emph{\textbf{A}}}_{\mathrm{\mathbf{M}},v}=1$, we have $$|| P(t_1,\ldots,t_n)||^{\emph{\textbf{A}}}_{\mathrm{\mathbf{M}},v}=1$$ and then $P_{\mathrm{\mathbf{M}}}(|| t_1||^{\emph{\textbf{A}}}_{\mathrm{\mathbf{M}},v},\ldots ,|| t_n ||^{\emph{\textbf{A}}}_{\mathrm{\mathbf{M}},v})=1$, that is, $P_{\mathrm{\mathbf{M}}}(g(\overline{t}_1),\ldots,g(\overline{t}_n))=1$.

\medskip

Finally, since by Lemma \ref{generates} the set $\{\overline{x}\mid x\in Var\}$ generates the universe $T^{\Phi}$ of the term structure associated to $\Phi$, $\langle f,g\rangle$ is the unique homomorphism such that for every $x \in Var$, $g(\overline{x})=v(x)$.   \end{proof}

\bigskip 

Observe that in languages in which the similarity symbol is interpreted by the crisp identity, by using an analogous argument to the one in Theorem \ref{initial model}, we obtain that the term structure is free in all the models of the theory and not only in the class of reduced models. 

\bigskip

To end this section we prove that the term structure associated to a universal Horn theory is a model of this theory. We have shown above in Section \ref{Horn clauses} that the set of Horn clauses is not recursively defined in MTL$\forall^m$. For that reason we will present here proofs that differ from the proofs of the corresponding results in classical logic, using induction on the rank of a formula instead of induction on the set of the (w-)Horn clauses. We introduce first the notion of \emph{rank of a formula} $\varphi$. Our definition is a variant of the notion of \emph{syntactic degree of a formula} in 
\cite[Definition 5.6.7]{Ha98}).
\begin{description} 
\item $rk(\varphi)=0$, if $\varphi$ is atomic;
\item $rk(\neg\varphi)=rk((\exists x)\varphi)=rk((\forall x)\varphi)=rk(\varphi)+1$;
\item $rk(\varphi\circ\psi)=rk(\varphi)+rk(\psi)$, for every binary propositional connective $\circ$.
\end{description}

\begin{lemma} \label{Horn substitucio}
Let $\varphi$ be a (w-)Horn clause where $x_1,\ldots,x_m$ are pairwise distinct free variables. Then, for every terms $t_1,\ldots,t_m$, $$\varphi (t_1,\ldots,t_m/x_1,\ldots,x_m)$$ is a (w-)Horn clause.
\end{lemma}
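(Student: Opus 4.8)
The plan is to argue purely syntactically, by analysing the grammatical shape of a (w-)Horn clause and checking that simultaneous substitution of terms for free variables respects each layer of the grammar given in Definitions \ref{strong basic}, \ref{qf Horn} and \ref{Horn}. Recall that such a $\varphi$ has the form $(\forall y_1)\dotsb(\forall y_k)\gamma$, where $\gamma=\phi_1\circ\dotsb\circ\phi_l$ (here $\circ$ denotes the strong conjunction $\&$ in the strong case and the weak conjunction $\wedge$ in the weak case), and each $\phi_i$ is a basic (w-)Horn formula $\alpha_{i,1}\circ\dotsb\circ\alpha_{i,n_i}\rightarrow\beta_i$ built from atomic formulas. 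The elementary fact on which everything rests is that substituting terms into an atomic formula $P(s_1,\ldots,s_r)$ yields $P(s_1',\ldots,s_r')$, where each $s_j'$ is the corresponding substitution instance of the term $s_j$; this is again atomic, so the base layer of the grammar is preserved.

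First I would treat the quantifier-free matrix. Since substitution commutes with every propositional connective, applying $(t_1,\ldots,t_m/x_1,\ldots,x_m)$ to a basic (w-)Horn formula $\alpha_{i,1}\circ\dotsb\circ\alpha_{i,n_i}\rightarrow\beta_i$ produces $\alpha_{i,1}'\circ\dotsb\circ\alpha_{i,n_i}'\rightarrow\beta_i'$, whose constituents are atomic by the remark above; hence it is again a basic (w-)Horn formula of the same kind. Applying the same observation to the outermost conjunction then shows that $\gamma(t_1,\ldots,t_m/x_1,\ldots,x_m)$ is once more a quantifier-free (w-)Horn formula, with the strong/weak character unchanged.

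It remains to push the substitution through the universal prefix, and this is where the only genuine care is needed. Because $x_1,\ldots,x_m$ occur \emph{free} in $\varphi$, none of them can be one of the bound variables $y_1,\ldots,y_k$, so the substitution effectively acts only inside the matrix $\gamma$. If some $t_j$ happens to contain a variable $y_i$, the standard capture-avoiding definition of substitution first renames $y_i$ to a fresh variable; since renaming a variable inside an atomic formula again gives an atomic formula and renaming commutes with the connectives, this operation preserves the quantifier-free (w-)Horn shape just established. Consequently $\varphi(t_1,\ldots,t_m/x_1,\ldots,x_m)$ has the form $(\forall y_1')\dotsb(\forall y_k')\gamma'$ with $\gamma'$ quantifier-free (w-)Horn, i.e.\ it is a (w-)Horn clause.

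The main obstacle, then, is exactly this capture/substitutability bookkeeping for the quantifier prefix; the rest is a routine verification that substitution fixes the atomic layer and commutes with $\&$, $\wedge$ and $\rightarrow$. As an alternative one could organise the whole argument as an induction on $rk(\varphi)$, in the spirit announced just before the lemma, splitting the induction step according to whether the outermost symbol is a universal quantifier or a (strong or weak) conjunction; but the direct layer-by-layer analysis above seems the most transparent route and avoids any reliance on a recursive generation of the class of (w-)Horn clauses, which as noted is unavailable for the strong conjunction in MTL$\forall^m$.
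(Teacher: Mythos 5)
Your argument is correct and matches the paper's proof in substance: the paper formalizes the same three observations (substitution sends atomic formulas to atomic formulas, hence preserves basic (w-)Horn formulas and their conjunctions, and commutes with the universal prefix) as an induction on $rk(\varphi)$, which for a Horn clause simply counts the quantifiers being peeled off one at a time. Your explicit handling of capture-avoiding renaming is in fact slightly more careful than the paper's proof, which only assumes without loss of generality that the bound variable is not among $x_1,\ldots,x_m$.
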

\begin{proof}
We prove it for the strong conjunction but the proof is analogous for the weak conjunction. By induction on $rk(\varphi)$.

\bigskip

\underline{Case $rk(\varphi)=0$}. If $\varphi$ is a basic Horn formula of the form $\psi_1\& \ldots \&\psi_n\rightarrow\psi$, it is clear that $\varphi (t_1,\ldots,t_m/x_1,\ldots,x_m)$ is still a basic Horn formula. In case that $\varphi=\phi_1\& \dotsb\&\phi_l$ is a conjunction of basic Horn formulas, note that $\varphi (t_1,\ldots,t_m/x_1,\ldots,x_m)$ has the same form as $\varphi$.

\bigskip

\underline{Case $rk(\varphi)=n+1$}. Assume inductively that for any Horn clause $\psi$ where $x_1,\ldots,x_m$ are pairwise distinct free variables in $\psi$ and whose rank is $n$, the formula $\psi (t_1,\ldots,t_m/x_1,\ldots,x_m)$ is a Horn clause. Let $\varphi$ be a Horn clause of rank $n+1$, then $\varphi$ is of the form $(\forall y)\psi$, where $\psi$ has rank $n$. Assume without loss of generality that and $y\not\in\{x_1,\ldots,x_m\}$, then$$[(\forall y)\psi](t_1,\ldots,t_m/x_1,\ldots,x_m)=(\forall y)[\psi(t_1,\ldots,t_m/x_1,\ldots,x_m)]$$ thus we can apply the inductive hypothesis to obtain the desired result.  \end{proof}

\begin{theorem} \label{theorem Horn formulas}
Let $\Phi$ be a consistent set of formulas. For every (w-)Horn clause $\varphi$, if $\Phi\vdash\varphi$, then $|| \varphi||^{\textbf{B}}_{\mathrm{\mathbf{T}}^{\Phi},e^{\Phi}}=1$.
\end{theorem}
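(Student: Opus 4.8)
The plan is to argue by induction on $rk(\varphi)$, as the discussion preceding Lemma \ref{Horn substitucio} suggests, exploiting throughout that $\langle\textbf{B},\mathrm{\mathbf{T}}^{\Phi}\rangle$ is two-valued: all connectives are then evaluated classically, and an infimum computed in $\textbf{B}$ equals $1$ exactly when every member of the family equals $1$. I would carry out the argument for the strong conjunction; the weak case is handled identically, using the analogous projection and adjunction theorems for $\wedge$ and the fact that in the two-element Boolean algebra $\&$ and $\wedge$ both collapse to the classical meet.

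Base case $rk(\varphi)=0$: here $\varphi$ is quantifier-free, so $\varphi=\phi_1\&\dotsb\&\phi_m$ with each $\phi_i$ a basic Horn formula $\alpha^i_1\&\dotsb\&\alpha^i_{n_i}\rightarrow\beta^i$ made of atomic formulas. From $\Phi\vdash\varphi$ and the projection theorems $\chi\&\chi'\rightarrow\chi$ and $\chi\&\chi'\rightarrow\chi'$ (iterated as needed) I would first extract $\Phi\vdash\phi_i$ for each $i$. Fixing $i$, the value $||\phi_i||^{\textbf{B}}_{\mathrm{\mathbf{T}}^{\Phi},e^{\Phi}}$ can fail to be $1$ only if every $||\alpha^i_j||^{\textbf{B}}_{\mathrm{\mathbf{T}}^{\Phi},e^{\Phi}}=1$ while $||\beta^i||^{\textbf{B}}_{\mathrm{\mathbf{T}}^{\Phi},e^{\Phi}}=0$. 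By Lemma \ref{terms and atomic formulas}(ii) the former gives $\Phi\vdash\alpha^i_j$ for all $j$; adjunction for $\&$ (the theorem $\chi\rightarrow(\chi'\rightarrow\chi\&\chi')$ plus modus ponens) yields $\Phi\vdash\alpha^i_1\&\dotsb\&\alpha^i_{n_i}$, and modus ponens with $\Phi\vdash\phi_i$ gives $\Phi\vdash\beta^i$, whence Lemma \ref{terms and atomic formulas}(ii) forces $||\beta^i||^{\textbf{B}}_{\mathrm{\mathbf{T}}^{\Phi},e^{\Phi}}=1$, a contradiction. So each conjunct, and hence $\varphi$, evaluates to $1$.

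Inductive step $rk(\varphi)=n+1$: since every (w-)Horn clause is a block of universal quantifiers prefixed to a quantifier-free matrix, positive rank forces $\varphi=(\forall y)\psi$ with $\psi$ a (w-)Horn clause of rank $n$. As $\textbf{B}$ is two-valued, establishing $||(\forall y)\psi||^{\textbf{B}}_{\mathrm{\mathbf{T}}^{\Phi},e^{\Phi}}=1$ reduces, through the semantics of $\forall$, to checking $||\psi||^{\textbf{B}}_{\mathrm{\mathbf{T}}^{\Phi},e^{\Phi}[y\mapsto a]}=1$ for each $a\in T^{\Phi}$. I would fix such an $a$, write $a=\overline{t}$, and combine $||t||^{\textbf{B}}_{\mathrm{\mathbf{T}}^{\Phi},e^{\Phi}}=\overline{t}$ from Lemma \ref{terms and atomic formulas}(i) with the standard substitution lemma for predicate fuzzy logics to rewrite the target value as $||\psi(t/y)||^{\textbf{B}}_{\mathrm{\mathbf{T}}^{\Phi},e^{\Phi}}$. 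Axiom $(\forall 1)$ and modus ponens give $\Phi\vdash\psi(t/y)$; Lemma \ref{Horn substitucio}, together with the observation that substituting terms for free variables leaves the rank unchanged, ensures that $\psi(t/y)$ is again a (w-)Horn clause of rank $n$; and the induction hypothesis then delivers $||\psi(t/y)||^{\textbf{B}}_{\mathrm{\mathbf{T}}^{\Phi},e^{\Phi}}=1$, as required.

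The delicate point is the substitution step: axiom $(\forall 1)$ demands that $t$ be substitutable for $y$ in $\psi$, which may fail through variable capture. I would dispose of this by first renaming the bound variables of $\psi$ away from the finitely many variables occurring in the chosen representative $t$. Such a renaming keeps $(\forall y)\psi$ provable from $\Phi$, preserves both the rank and the (w-)Horn form, and does not change the truth value in $\langle\textbf{B},\mathrm{\mathbf{T}}^{\Phi}\rangle$, so the argument above applies verbatim to the renamed clause. Everything else is routine bookkeeping with the classical evaluation in $\textbf{B}$.
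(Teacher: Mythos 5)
Your proposal is correct and follows essentially the same route as the paper's proof: induction on $rk(\varphi)$, with the base case handled via Lemma \ref{terms and atomic formulas}(ii) together with projection/adjunction for $\&$ and modus ponens (you run this step by contradiction where the paper contraposes, which is the same argument), and the inductive step using Axiom $(\forall 1)$, Lemma \ref{Horn substitucio}, and the two-valued semantics of $\forall$ exactly as the paper does. Your explicit attention to substitutability and bound-variable renaming is a technicality the paper leaves implicit, but it does not change the structure of the argument.
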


\begin{proof} We prove it for the strong conjunction but the proof is analogous for the weak conjunction. By induction on $rk(\varphi)$.

\bigskip

\underline{Case $rk(\varphi)=0$.} We can distinguish two subcases:  

\medskip

1) If $\varphi=\psi_1\&\dotsb\&\psi_n\rightarrow\psi$ is a basic Horn formula, we have to show that $||\psi_1\&\dotsb\&\psi_n||^{\emph{\textbf{B}}}_{\mathrm{\mathbf{T}}^{\Phi},e^{\Phi}}\leq|| \psi||^{\emph{\textbf{B}}}_{\mathrm{\mathbf{T}}^{\Phi},e^{\Phi}}$. If $|| \psi||^{\emph{\textbf{B}}}_{\mathrm{\mathbf{T}}^{\Phi},e^{\Phi}}=1$, we are done. Otherwise, by Definition \ref{structure}, $\Phi\not \vdash \psi$. Consequently, since $\Phi\vdash \psi_1\&\dotsb\&\psi_n\rightarrow\psi$, $\Phi\not \vdash \psi_1\&\dotsb\&\psi_n$ and thus for some $1 \leq i \leq n$, $\Phi\not \vdash \psi_i$. By Lemma \ref{terms and atomic formulas} (ii), we have $|| \psi_i||^{\emph{\textbf{B}}}_{\mathrm{\mathbf{T}}^{\Phi},e^{\Phi}}=0$ and then $||\psi_1\&\dotsb\&\psi_n||^{\emph{\textbf{B}}}_{\mathrm{\mathbf{T}}^{\Phi},e^{\Phi}}=0$. Therefore, we can conclude $||\psi_1\&\dotsb\&\psi_n||^{\emph{\textbf{B}}}_{\mathrm{\mathbf{T}}^{\Phi},e^{\Phi}}\leq|| \psi||^{\emph{\textbf{B}}}_{\mathrm{\mathbf{T}}^{\Phi},e^{\Phi}}$. Note that if $n=0$, $\varphi$ is an atomic formula and the property holds by Lemma \ref{terms and atomic formulas} (ii).

\smallskip

2) If $\varphi=\psi_1\&\dotsb\&\psi_n$ is a conjunction of basic Horn formulas and $\Phi\vdash\varphi$, then for every $1 \leq i \leq n$, $\Phi\vdash \psi_i$. Thus, by 1), for every $1 \leq i \leq n$, $||\psi_i||^{\emph{\textbf{B}}}_{\mathrm{\mathbf{T}}^{\Phi},e^{\Phi}}=1$ and then $|| \varphi||^{\emph{\textbf{B}}}_{\mathrm{\mathbf{T}}^{\Phi},e^{\Phi}}=1$.

\bigskip

\underline{Case $rk(\varphi)=n+1$.}

\medskip
 If $\varphi=(\forall x)\phi(x)$ is a Horn clause, where $rk(\phi(x))=n$ and $\Phi\vdash\varphi$, by Axiom \emph{$\forall 1$} of $L\forall^m$, for every term $t$, $\Phi\vdash\phi(t/x)$. Since by Lemma \ref{Horn substitucio}, $\phi(t/x)$ is also a Horn clause and $rk(\phi(t/x))=n$, we can apply the inductive hypothesis and hence for every term $t$, $||\phi(t/x)||^{\emph{\textbf{B}}}_{\mathrm{\mathbf{T}}^{\Phi},e^{\Phi}}=1$, that is, by Lemma \ref{terms and atomic formulas} (i), for every element $\overline{t}$ of the domain, $||\phi(x)||^{\emph{\textbf{B}}}_{\mathrm{\mathbf{T}}^{\Phi},e^{\Phi}(x\rightarrow\overline{t})}=1$. Therefore, we can conclude that $||(\forall x)\phi(x)||^{\emph{\textbf{B}}}_{\mathrm{\mathbf{T}}^{\Phi},e^{\Phi}}=1$.\end{proof}
 
\bigskip

Observe that the inverse direction of Theorem \ref{theorem Horn formulas} is not true. Assume that we work in G\"odel predicate fuzzy logic G$\forall$. Let $P$ be a $1$-ary predicate symbol, $\overline{c}$ be an individual constant, $\Phi=\{\neg(P\overline{c}\rightarrow\overline{0})\}$ and $\varphi=P\overline{c}\rightarrow\overline{0}$. Now we show that $|| \varphi||^{\emph{\textbf{B}}}_{\mathrm{\mathbf{T}}^{\Phi}}=1$, but $\Phi\not \vdash\varphi$. First, in order to show that $\Phi\not \vdash\varphi$, consider a G-algebra $\emph{\textbf{A}}$ with domain the real interval $[0,1]$ and a structure $\langle\emph{\textbf{A}},\mathrm{\mathbf{M}}\rangle$ such that $||P\overline{c}||^{\emph{\textbf{A}}}_{\mathrm{\mathbf{M}}}=0.8$, then we have that $||\Phi||^{\emph{\textbf{A}}}_{\mathrm{\mathbf{M}}}=1$ and $||P\overline{c}\to \overline{0}||^{\emph{\textbf{A}}}_{\mathrm{\mathbf{M}}}\neq1$ consequently $\Phi\not\vdash_GP\overline{c}\to \overline{0}$. Using the same structure we obtain also that $\Phi\not\vdash_GP\overline{c}$. Finally, since $\Phi\not\vdash_GP\overline{c}$, by Lemma \ref{terms and atomic formulas}, $||P\overline{c}||^{\emph{\textbf{B}}}_{\mathrm{\mathbf{T}}^{\Phi}}=0$ and then $||\varphi||^{\emph{\textbf{B}}}_{\mathrm{\mathbf{T}}^{\Phi}}=1$.

\bigskip
Remark that, as a corollary of Theorem \ref{theorem Horn formulas}, we have that the substructure of $\langle\emph{\textbf{B}},\mathrm{\mathbf{T}}^{\Phi}\rangle$ generated by the set of ground terms is also a model for all universal Horn sentences that are consequences of the theory. Another important corollary of Theorem \ref{theorem Horn formulas} is the following:

\begin{corollary} \label{classic} Every consistent set of (w-)Horn clauses without free variables has a classical model.
\end{corollary}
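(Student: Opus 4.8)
The plan is to take the term structure $\langle\textbf{B},\mathrm{\mathbf{T}}^{\Phi}\rangle$ itself as the desired classical model. Since $\Phi$ is consistent, Definition \ref{structure} applies, so the structure is well-defined; and as noted just after that definition, it is a \emph{classical} structure, because its truth-value algebra $\textbf{B}$ is the two-valued Boolean algebra. I would also remark that, $\textbf{B}$ being finite, all the infima and suprema of Definition \ref{evaluation} exist, hence the structure is safe and every formula receives a value. Thus it only remains to verify that $\langle\textbf{B},\mathrm{\mathbf{T}}^{\Phi}\rangle$ is a model of $\Phi$.

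For this I would invoke Theorem \ref{theorem Horn formulas}. Given any $\varphi\in\Phi$, it is a (w-)Horn clause and trivially $\Phi\vdash\varphi$, so the theorem immediately gives $||\varphi||^{\textbf{B}}_{\mathrm{\mathbf{T}}^{\Phi},e^{\Phi}}=1$, where $e^{\Phi}$ is the canonical evaluation of Definition \ref{canonical}.

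The last step, and the one where the hypothesis \emph{without free variables} is genuinely needed, is to pass from truth under the single evaluation $e^{\Phi}$ to truth under every $\mathrm{\mathbf{T}}^{\Phi}$-evaluation. Since each $\varphi\in\Phi$ is a sentence, its truth value does not depend on the evaluation of the object variables, so $||\varphi||^{\textbf{B}}_{\mathrm{\mathbf{T}}^{\Phi},v}=1$ for all $v$, i.e. $||\varphi||^{\textbf{B}}_{\mathrm{\mathbf{T}}^{\Phi}}=1$. As $\varphi\in\Phi$ was arbitrary, $\langle\textbf{B},\mathrm{\mathbf{T}}^{\Phi}\rangle$ models $\Phi$ and, being a structure over the two-valued Boolean algebra, is a classical model, as required. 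I expect the whole argument to be short once Theorem \ref{theorem Horn formulas} is in hand; the only delicate point is that the theorem secures satisfaction merely under $e^{\Phi}$, so the closedness of the clauses is exactly what lets me conclude that the term structure is a model, whereas for open Horn clauses this would fail in general.
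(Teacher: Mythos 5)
Your proof is correct and follows exactly the route the paper intends: the corollary is stated immediately after Theorem \ref{theorem Horn formulas} as a direct consequence, obtained by taking the term structure $\langle\textbf{B},\mathrm{\mathbf{T}}^{\Phi}\rangle$ as the classical model. Your explicit attention to the passage from the canonical evaluation $e^{\Phi}$ to arbitrary evaluations, which is where the hypothesis that the clauses have no free variables is used, is a point the paper leaves implicit but is exactly right.
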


Observe that Corollary \ref{classic} is not true in general. The consistent sentence $\neg (\overline{1} \to Pa) \& \neg (Pa \to \overline{0})$ has no classical model.

\section{Herbrand Structures}
\label{Herbrand Structures}

In this section we introduce Herbrand structures for fuzzy universal Horn theories. They are a prominent form of term structures, specially helpful when dealing with sets of equality-free formulas (that is, formulas in which the symbol $\approx$ does not occur), the reason is that, as it is shown below in Lemma \ref{equality}, no non-trivial equations are derivable from a set of equality-free formulas. In classical logic, Herbrand structures have been used to present a simplified version of a term structure associated to a consistent theory \cite[Ch.11]{EbiFlu94}, and they have also a relevant role in the foundation of logic programming (see for instance \cite{DoPo10}). Regarding Herbrand structures in fuzzy logic programming, we refer to the works \cite{Ge05,Voj01,Ebra01}. Throughout this section we assume that the symbol $\approx$ is interpreted always as the crisp identity and that there is at least an individual constant in the language.

\begin{lemma} \label{equality} Let $\Phi$ be a consistent set of equality-free formulas, then for every terms $t_1,t_2$,

$$ \text{If }\Phi\vdash t_1\approx t_2, \text{ then }t_1=t_2.$$

\end{lemma}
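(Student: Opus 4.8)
The plan is to argue by contraposition: assuming $t_1$ and $t_2$ are syntactically distinct terms, I will produce a single model of $\Phi$ together with an evaluation $v$ for which $\| t_1\approx t_2\|^{\mathbf{A}}_{\mathbf{M},v}\neq 1$, and then invoke soundness. Indeed, since $\Phi$ together with the axioms S1--S3, C1, C2 of Definition \ref{def similarity} proves $t_1\approx t_2$, such a model would force $\| t_1\approx t_2\|^{\mathbf{A}}_{\mathbf{M},v}=1$ for \emph{every} $v$, a contradiction. So the whole content is the construction of a model of $\Phi$ that interprets $\approx$ as the crisp identity and yet assigns distinct values to distinct terms. The guiding intuition is that, because $\Phi$ is equality-free, the symbol $\approx$ is effectively fresh for $\Phi$, so we are free to reinterpret it as crisp identity without disturbing the truth of $\Phi$; the only real work is to separate terms.

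For the construction I would first use consistency of $\Phi$: by the completeness theorem for $L\forall^m$ there is a model $\langle\mathbf{A},\mathbf{N}\rangle$ with $\|\Phi\|^{\mathbf{A}}_{\mathbf{N}}=1$. I then form the ``product with the free term algebra'': let the domain be $M=N\times T$, where $T$ is the set of all terms, interpret each function symbol componentwise as $F_{\mathbf{M}}((a_1,s_1),\ldots,(a_n,s_n))=(F_{\mathbf{N}}(a_1,\ldots,a_n),F(s_1,\ldots,s_n))$ (so the second coordinate is the free term construction), interpret each predicate symbol $P\neq{\approx}$ by reading off the first coordinate only, $P_{\mathbf{M}}((a_1,s_1),\ldots,(a_n,s_n))=P_{\mathbf{N}}(a_1,\ldots,a_n)$, and interpret $\approx_{\mathbf{M}}$ as the crisp identity on $M$. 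The key lemma is that for every equality-free formula $\varphi$ and every $\mathbf{M}$-evaluation $w$ one has $\|\varphi\|^{\mathbf{A}}_{\mathbf{M},w}=\|\varphi\|^{\mathbf{A}}_{\mathbf{N},\pi_1\circ w}$, where $\pi_1$ is the first projection; this is proved by induction on $\varphi$, the atomic case holding because predicates in $\mathbf{M}$ ignore the second coordinate and $\approx$ never occurs, and the quantifier cases holding because $\pi_1$ is surjective and the value of $\varphi$ depends only on the first coordinates of the evaluated variables, so that the infimum and supremum over $M$ collapse to those over $N$. Consequently $\|\Phi\|^{\mathbf{A}}_{\mathbf{M}}=1$, and $\langle\mathbf{A},\mathbf{M}\rangle$ is safe. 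One also checks directly that the crisp identity satisfies S1--S3, C1, C2 with value $1$, so $\langle\mathbf{A},\mathbf{M}\rangle$ is genuinely a model of $L\forall^m$.

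Finally I separate terms through the second coordinate: fix any $a_0\in N$ and define the evaluation $v(x)=(a_0,x)$, so that the second coordinate of $v(x)$ is the variable $x$ itself. A straightforward induction on terms then gives that the second coordinate of $\|t\|^{\mathbf{A}}_{\mathbf{M},v}$ is exactly $t$; hence distinct terms $t_1\neq t_2$ receive values differing in their second coordinate, so $\|t_1\approx t_2\|^{\mathbf{A}}_{\mathbf{M},v}=0\neq 1$ because $\approx_{\mathbf{M}}$ is crisp identity. Together with $\|\Phi\|^{\mathbf{A}}_{\mathbf{M}}=1$ and soundness, this yields $\Phi\not\vdash t_1\approx t_2$, proving the contrapositive. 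I expect the main obstacle to be exactly the quantifier step in the preservation lemma: naively reinterpreting $\approx$ as identity inside a free term model only yields inequalities between the truth values in $\mathbf{M}$ and in $\mathbf{N}$ (an $\inf$ over a subset for $\forall$, a $\sup$ for $\exists$), which would restrict the argument to universal $\Phi$; taking the product with the surjective projection is what restores an honest equality of truth values and makes the argument go through for arbitrary equality-free formulas.
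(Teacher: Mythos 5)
Your route is genuinely different from the paper's. The paper disposes of this lemma in two lines: since CL$\forall$ extends $L\forall^m$, the derivation $\Phi\vdash t_1\approx t_2$ is also a classical derivation, and the classical analogue \cite[Ch.~11]{EbiFlu94} gives $t_1=t_2$. You instead reprove that fact directly in the fuzzy setting by exhibiting a countermodel: a model of $\Phi$ interpreting $\approx$ as crisp identity in which syntactically distinct terms denote distinct elements. The construction (product of a model of $\Phi$ with the absolutely free term algebra, predicates read off the first coordinate, $\approx$ read off as identity on the product) is the right one, and your remark about why the product, rather than a naive reinterpretation of $\approx$, is needed to make the quantifier cases come out as equalities is exactly on point. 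Your self-contained argument also buys something the reduction does not: it uses only consistency of $\Phi$ in $L\forall^m$, whereas the classical theorem being cited requires classical consistency, which a fuzzily consistent equality-free theory need not have (compare the paper's own example $\neg(\overline{1}\to Pa)\,\&\,\neg(Pa\to\overline{0})$ following Corollary~\ref{classic}).

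There is, however, one genuine gap: the claim that $\langle\mathbf{A},\mathbf{M}\rangle$ is safe. Your preservation lemma shows that truth values are defined (and agree with those in $\mathbf{N}$) only for \emph{equality-free} formulas, but the soundness theorem you invoke at the end is a theorem about safe structures, i.e.\ structures in which \emph{every} formula has a defined value; a Hilbert-style derivation of $t_1\approx t_2$ from $\Phi$ will in general pass through formulas in which $\approx$ occurs under quantifiers mixed with graded predicates (at the very least via instances of the propositional axiom schemata). For such formulas the required infima and suprema in $\mathbf{A}$ are not covered by your induction, and since $\mathbf{A}$ need not be a complete lattice (core fuzzy predicate logics are not in general complete with respect to models over complete chains), their existence is not automatic. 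I expect $\mathbf{M}$ is in fact safe --- roughly because each first coordinate is paired with infinitely many second coordinates, so quantification over $M$ reduces to a finite meet or join of quantifications over $N$ of equality-free formulas, indexed by the finitely many ways the $\approx$-atoms can resolve --- but this must be stated and proved as a separate lemma; as written, the step ``and $\langle\mathbf{A},\mathbf{M}\rangle$ is safe'' is asserted rather than established, and the appeal to soundness rests on it.
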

\begin{proof}
Assume that $\Phi$ is a consistent set of equality-free formulas and $\Phi\vdash t_1\approx t_2$ for terms $t_1,t_2$ of the language. Since CL$\forall$ is an extension of MTL$\forall^m$, $\Phi\vdash t_1\approx t_2$ in CL$\forall$. Then, by the analogous classical result \cite[Ch. 11, Th. 3.1]{EbiFlu94}, we have $t_1=t_2$. \end{proof}

\begin{defi} [Herbrand Structure] \label{Herbrand structure}
The \emph{Herbrand universe of a predicate language} is the set of all ground terms of the language. A \emph{Herbrand structure} is a structure $\langle\textbf{A},\emph{\textbf{H}}\rangle$, where $\emph{\textbf{H}}$ is the Herbrand universe, and: \begin{itemize} 

\item[] For any individual constant symbol $c$, $c_{\emph{\textbf{H}}}=c$. 

\item[] For any $n$-ary function symbol $F$ and any $t_1,\ldots,t_n\in H$, \begin{center}

\smallskip

$F_{\emph{\textbf{H}}}(t_1,\ldots,t_n)=F(t_1,\ldots,t_n)$

\end{center}

\end{itemize} 

\end{defi}

Observe that in Definition \ref{Herbrand structure} no restrictions are placed on the interpretations of the predicate symbols and on the algebra we work over. The canonical models $\langle\emph{\textbf{Lind}}_{T},\mathbf{CM}(T)\rangle$ introduced in {\cite[Def.9]{CiHa06} are examples of Herbrand structures. In these structures $\emph{\textbf{Lind}}_{T}$ is the Lindenbaum algebra of a theory $T$ and the domain of $\mathbf{CM}(T)$ is the set of individual constants (the language in \cite{CiHa06} do not contain function symbols). Now we introduce a particular case of Herbrand structure and we show that every consistent Horn clause without free variables has a model of this kind.

\begin{defi}  [H-structure and H-model] \label{Herbrand boolean} Let $\overline{H}$ be the set of all \newline
equality-free sentences of the form $P(t_1,\ldots,t_n)$, where $t_1,\ldots,t_n$ are ground terms, $n\geq 1$ and $P$ is an $n$-ary predicate symbol. For every subset $H$ of $\overline{H}$, we define the Herbrand structure $\langle\textbf{B},\mathbf{N}^{\emph{H}}\rangle$, where $\textbf{B}$ is the two-valued Boolean algebra, the domain $\mathbf{N}^{\emph{H}}$ is the set of all ground terms of the language, the interpretation of the function symbols is as in every Herbrand structure and the interpretation of the predicate symbols is as follows: for every $n\geq 1$ and every $n$-ary predicate symbol $P$,

$$ P_{\mathrm{\mathbf{N}}^{\emph{H}}}(t_1,\ldots,t_n)=\begin{cases} 1, & \mbox{if } P(t_1,\ldots,t_n) \in H \\ 0, & \mbox{otherwise. } \end{cases}  $$
We call this type of Herbrand structures \emph{H-structures}. If $\Phi$ is a set of sentences, we say that an \emph{H}-structure is an \emph{H-model} of $\Phi$ if it is a model of $\Phi$.

 \end{defi}

\begin{proposition} \label{proposition} Let $\langle\textbf{A},\mathbf{M}\rangle$ be a structure and H be the set of all atomic equality-free sentences $\sigma$ such that $||\sigma||^{\textbf{A}}_{\emph{\textbf{M}}}=1$. Then, for every equality-free sentence $\varphi$ which is a (w-)Horn clause,  if $||\varphi||^{\textbf{A}}_{\emph{\textbf{M}}}=1$, then $||\varphi||^{\textbf{B}}_{\emph{\textbf{N}}^{\emph{H}}}=1$, where $\langle\textbf{B},\emph{\textbf{N}}^{\emph{H}}\rangle$ is an \emph{H}-structure as in Definition \ref{Herbrand boolean}.
\end{proposition}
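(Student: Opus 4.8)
The plan is to mirror the proof of Theorem \ref{theorem Horn formulas}, replacing the term structure by the H-structure $\langle\textbf{B},\mathbf{N}^{H}\rangle$ and running the induction on the rank of $\varphi$; I would first remark that the argument is carried out for the strong conjunction, the weak case being analogous (using that the lattice meet equals $1$ iff all arguments do). Before the induction I would record two routine facts about $\langle\textbf{B},\mathbf{N}^{H}\rangle$. First, by induction on term complexity and Definition \ref{Herbrand structure}, every ground term $t$ satisfies $||t||^{\textbf{B}}_{\mathbf{N}^{H}}=t$; consequently evaluating a variable $x$ to the domain element $t$ coincides with the syntactic substitution of $t$, that is $||\phi||^{\textbf{B}}_{\mathbf{N}^{H},e[x\mapsto t]}=||\phi(t/x)||^{\textbf{B}}_{\mathbf{N}^{H}}$. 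Second, for an atomic equality-free sentence $P(t_1,\ldots,t_n)$ these facts together with Definition \ref{Herbrand boolean} give $||P(t_1,\ldots,t_n)||^{\textbf{B}}_{\mathbf{N}^{H}}=1$ iff $P(t_1,\ldots,t_n)\in H$ iff $||P(t_1,\ldots,t_n)||^{\textbf{A}}_{\mathbf{M}}=1$. This settles the atomic case and is the only place where the definition of $H$ enters.

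For the base case $rk(\varphi)=0$, $\varphi$ is a conjunction of basic Horn formulas which, being a sentence, are built from ground atomic formulas. For a single basic Horn sentence $\psi_1\&\dotsb\&\psi_n\rightarrow\psi$ I would argue that, since $\textbf{B}$ is two-valued, the implication has value $1$ unless $||\psi_1\&\dotsb\&\psi_n||^{\textbf{B}}_{\mathbf{N}^{H}}=1$ while $||\psi||^{\textbf{B}}_{\mathbf{N}^{H}}=0$. In that situation each $||\psi_i||^{\textbf{B}}_{\mathbf{N}^{H}}=1$, so by the atomic fact each $||\psi_i||^{\textbf{A}}_{\mathbf{M}}=1$, whence $||\psi_1\&\dotsb\&\psi_n||^{\textbf{A}}_{\mathbf{M}}=1$; then $||\varphi||^{\textbf{A}}_{\mathbf{M}}=1$ forces $||\psi||^{\textbf{A}}_{\mathbf{M}}=1$ (using $a\Rightarrow b=1$ iff $a\le b$ in $\textbf{A}$), and the atomic fact returns $||\psi||^{\textbf{B}}_{\mathbf{N}^{H}}=1$, a contradiction. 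The conjunctive subcase reduces to this one because $a*b=1$ iff $a=b=1$ in an MTL-algebra, and the case $n=0$ is immediate from the atomic fact.

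For the inductive step $rk(\varphi)=k+1$, being a Horn clause $\varphi$ has the form $(\forall x)\phi$ with $rk(\phi)=k$. From $||\varphi||^{\textbf{A}}_{\mathbf{M}}=1$ and the reading of the universal quantifier as an infimum I obtain $||\phi||^{\textbf{A}}_{\mathbf{M},v[x\mapsto a]}=1$ for every $a\in M$, in particular for $a=||t||^{\textbf{A}}_{\mathbf{M}}$ for each ground term $t$; the substitution lemma in $\textbf{A}$ then yields $||\phi(t/x)||^{\textbf{A}}_{\mathbf{M}}=1$. By Lemma \ref{Horn substitucio}, $\phi(t/x)$ is again a (w-)Horn clause, it stays equality-free, it becomes a sentence, and $rk(\phi(t/x))=k$, so the inductive hypothesis gives $||\phi(t/x)||^{\textbf{B}}_{\mathbf{N}^{H}}=1$. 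Using the substitution fact recorded above, $||\phi||^{\textbf{B}}_{\mathbf{N}^{H},e[x\mapsto t]}=1$ for every ground term $t$, and since the domain of $\mathbf{N}^{H}$ is exactly the set of ground terms, the infimum defining $||(\forall x)\phi||^{\textbf{B}}_{\mathbf{N}^{H}}$ is taken over values all equal to $1$, so $||\varphi||^{\textbf{B}}_{\mathbf{N}^{H}}=1$.

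The step I would be most careful about is the quantifier case: it works precisely because the domain of the H-structure consists of ground terms, so universal quantification in $\mathbf{N}^{H}$ is captured by substitution of ground terms, and truth of $(\forall x)\phi$ in $\mathbf{M}$ (which ranges over the possibly larger domain $M$) is more than enough to cover the values of those ground terms. This asymmetry is exactly why the converse implication fails, and why the standing hypothesis that the language contains an individual constant is needed, namely to guarantee that the Herbrand universe, and hence the domain of $\mathbf{N}^{H}$, is nonempty. The only genuinely new ingredient beyond Theorem \ref{theorem Horn formulas} is that the atomic case is now governed by membership in $H$, that is by truth in $\mathbf{M}$, rather than by provability from $\Phi$.
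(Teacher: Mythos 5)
Your proof is correct and follows essentially the same route as the paper's: induction on the rank of $\varphi$, with the base case resting on the observation that a ground atomic sentence is true in $\langle\textbf{B},\mathbf{N}^{H}\rangle$ iff it lies in $H$ iff it is true in $\langle\textbf{A},\mathbf{M}\rangle$, and the quantifier step using Lemma \ref{Horn substitucio} together with the fact that the domain of $\mathbf{N}^{H}$ is exactly the set of ground terms. The only cosmetic differences are that you phrase the basic-Horn case as a contradiction where the paper does a direct case split, and you derive $||\phi(t/x)||^{\textbf{A}}_{\mathbf{M}}=1$ semantically via the infimum and substitution lemma where the paper invokes Axiom $\forall 1$.
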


\begin{proof} We prove it for the strong conjunction but the proof is analogous for the weak conjunction. Assume that $\varphi$ is an equality-free sentence which is a Horn clause and $||\varphi||^{\emph{\textbf{A}}}_{\textbf{M}}=1$. We proceed by induction on the rank of $\varphi$ 

\bigskip

\underline{Case $rk(\varphi)=0$.} We distinguish two cases: 

\bigskip

1) If $\varphi=\psi_1\&\dotsb\&\psi_n\rightarrow\psi$ is a basic Horn formula, we have to show that $||\psi_1\&\dotsb\&\psi_n||^{\emph{\textbf{B}}}_{\textbf{N}^{\text{H}}} \leq ||\psi||^{\emph{\textbf{B}}}_{\textbf{N}^{\text{H}}}$. If $||\psi||^{\emph{\textbf{B}}}_{\textbf{N}^{\text{H}}}=1$, we are done. Otherwise, by Definition \ref{Herbrand boolean}, $\psi\not \in$ H , and thus $||\psi||^{\emph{\textbf{A}}}_{\textbf{M}}\not = 1$. Therefore, since $||\varphi||^{\emph{\textbf{A}}}_{\textbf{M}}=1$, we have that $||\psi_1\&\dotsb\&\psi_n||^{\emph{\textbf{A}}}_{\textbf{M}}\not = 1$. Consequently for some $1 \leq i \leq n$, $||\psi_i||^{\emph{\textbf{A}}}_{\textbf{M}}\not = 1$, therefore $\psi_i\not \in$ H and $||\psi_i||^{\emph{\textbf{B}}}_{\textbf{N}^{\text{H}}} =0$ and then $||\psi_1\&\dotsb\&\psi_n||^{\emph{\textbf{B}}}_{\textbf{N}^{\text{H}}} =0$. Hence, $||\psi_1\&\dotsb\&\psi_n||^{\emph{\textbf{B}}}_{\textbf{N}^{\text{H}}} \leq ||\psi||^{\emph{\textbf{B}}}_{\textbf{N}^{\text{H}}}$.

\medskip

2)  If $\varphi=\psi_1\&\dotsb\&\psi_n$ is a strong conjunction of basic Horn formulas, then by 1) we have that $||\psi_i||^{\emph{\textbf{A}}}_{\textbf{M}}=1$ implies $||\psi_i||^{\emph{\textbf{B}}}_{\textbf{N}^{\text{H}}}=1$, for each $i\in\{1,\ldots,n\}$. Thus,  if $||\varphi||^{\emph{\textbf{A}}}_{\textbf{M}}=1$, then $||\varphi||^{\emph{\textbf{B}}}_{\textbf{N}^{\text{H}}}=1$.  

\bigskip

\underline{Case $rk(\varphi)=n+1$.}

\bigskip

Let $\varphi=(\forall x)\phi(x)$ be a Horn clause with $rk(\phi(x))=n$. Since $||\varphi||^{\emph{\textbf{A}}}_{\textbf{M}}=1$, by Axiom \emph{$\forall 1$} of $L\forall^m$, for every ground term $t$, $||\phi(t/x)||^{\emph{\textbf{A}}}_{\textbf{M}}=1$.  By Lemma \ref{Horn substitucio}, $\phi(t/x)$ is also a Horn clause, and since $rk(\phi(t/x))=n$, we can apply the inductive hypothesis and hence for every ground term $t$, $||\phi(t/x)|^{\emph{\textbf{B}}}_{\textbf{N}^{\text{H}}}=1$. Finally, since $\langle\emph{\textbf{B}},\mathbf{N}^{\text{H}}\rangle$ is a Herbrand structure, we have that for every element $t$ of its domain $||\phi(t/x)||^{\emph{\textbf{B}}}_{\textbf{N}^{\text{H}}}=1$, and consequently $||(\forall x)\phi(x)||^{\emph{\textbf{B}}}_{\textbf{N}^{\text{H}}}=1$. \end{proof}

\bigskip

Notice that Proposition \ref{proposition} does not assert that given a structure $\langle\emph{\textbf{A}},\textbf{M}\rangle$,  $\langle\emph{\textbf{A}},\textbf{M}\rangle$ and $\langle\emph{\textbf{B}},\textbf{N}^{\text{H}}\rangle$ satisfy exactly the same equality-free sentences which are Horn clauses. Actually, this is not true. Let $\mathcal{P}$ be a predicate language with three monadic predicate symbols $P_1,P_2,P_3$ and one individual constant $c$.  Suppose that $\emph{\textbf{A}}$ is the \L ukasiewicz algebra $[0,1]_{\text{\L}}$ and let  $\langle\emph{\textbf{A}},\textbf{M}\rangle$ be a structure over $\mathcal{P}$ such that $||P_1(c)||^{\emph{\textbf{A}}}_{\textbf{M}}=1$, $||P_2(c)||^{\emph{\textbf{A}}}_{\textbf{M}}=0.9$ and $||P_3(c)||^{\emph{\textbf{A}}}_{\textbf{M}}=0.5$. Let $\varphi$ be $P_1(c)\& P_2(c)\rightarrow P_3(c)$, $\varphi$ is an equality-free sentence which is a Horn clause with $||P_1(c)\& P_2(c)\rightarrow P_3(c)||^{\emph{\textbf{A}}}_{\textbf{M}}=0.6$, but if we consider its associated H-structure, $\langle\emph{\textbf{B}},\textbf{N}^{\text{H}}\rangle$, we have H$=\{P_1(c)\}$ and thus $||P_1(c)\& P_2(c)\rightarrow P_3(c)||^{\emph{\textbf{B}}}_{\textbf{N}^{\text{H}}}=1$.

\begin{corollary} \label{corollary H-model} An equality-free sentence which is a (w-)Horn clause has a model if and only if it has an \emph{H}-model. 
\end{corollary}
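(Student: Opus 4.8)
The plan is to read off both directions from Proposition \ref{proposition}, since the genuine model-theoretic work (an induction on $rk(\varphi)$) has already been carried out there. The right-to-left implication is immediate: by Definition \ref{Herbrand boolean} an \emph{H}-model of $\varphi$ is an \emph{H}-structure $\langle\textbf{B},\mathbf{N}^{\text{H}}\rangle$ that happens to satisfy $\varphi$, and such an \emph{H}-structure is a bona fide structure over the two-valued Boolean algebra $\textbf{B}$ in the sense of Definition \ref{evaluation}. Hence if $\varphi$ has an \emph{H}-model it trivially has a model, and nothing more is needed.

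For the left-to-right implication I would argue as follows. Assume $\varphi$ has a model, so there is a structure $\langle\textbf{A},\mathbf{M}\rangle$ with $||\varphi||^{\textbf{A}}_{\textbf{M}}=1$. Put $H$ equal to the set of all atomic equality-free sentences $\sigma$ with $||\sigma||^{\textbf{A}}_{\textbf{M}}=1$, exactly as in the hypothesis of Proposition \ref{proposition}. Because $\varphi$ is a sentence, the relevant such $\sigma$ are of the form $P(t_1,\ldots,t_n)$ with $t_1,\ldots,t_n$ ground, so they lie in $\overline{H}$ and the associated \emph{H}-structure $\langle\textbf{B},\mathbf{N}^{\text{H}}\rangle$ of Definition \ref{Herbrand boolean} is well-defined. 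Proposition \ref{proposition} then gives $||\varphi||^{\textbf{B}}_{\mathbf{N}^{\text{H}}}=1$, so $\langle\textbf{B},\mathbf{N}^{\text{H}}\rangle$ is a model of $\varphi$ and, being an \emph{H}-structure, an \emph{H}-model of $\varphi$.

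I expect no real obstacle here, only bookkeeping: the one point deserving a word is that the set $H$ extracted from $\langle\textbf{A},\mathbf{M}\rangle$ matches the index set $\overline{H}$ used to build \emph{H}-structures, which holds once one observes that the true atomic equality-free sentences of arity $n\geq 1$ over ground terms form a subset of $\overline{H}$ (the $0$-ary truth constants play no role in the construction). With this remark the corollary is an immediate application of Proposition \ref{proposition}.
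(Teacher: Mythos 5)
Your proof is correct and follows exactly the route the paper intends: the corollary is stated immediately after Proposition \ref{proposition} with no separate argument, precisely because the left-to-right direction is the proposition applied to the $H$ extracted from any given model, and the converse is trivial since an \emph{H}-structure is itself a structure. Your remark about $H\subseteq\overline{H}$ is a reasonable piece of bookkeeping that the paper leaves implicit.
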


We can conclude here, in the same sense as in Corollary \ref{classic}, that every consistent equality-free sentence which is a (w-)Horn clause has a classical Herbrand model.

\section{Discussion, Conclusions and Future work}  
\label{Conclusions}

The present paper is a first step towards a systematic study of universal Horn theories over predicate fuzzy logics from a model-theoretic perspective. We have proved the existence of free models in universal Horn classes of structures. In the future we will pay special attention to the study of possible characterizations of universal Horn theories in terms of the existence of these free models and its relevance for fuzzy logic programming. 
    
Future work will be devoted also to the analysis of the logical properties of the different definitions of Horn clauses introduced so far in the literature of fuzzy logics, for instance see \cite{BeVic06, BeVic06b, Ma99}. It is important to underline here some differences between our work and some important related references. Our paper differs from the approaches of B\v{e}lohl\'avek and Vychodil and also the one of Gerla, due to mainly three reasons: it is not restricted to fuzzy equalities, it does not adopt the Pavelka-style definition of the Horn clauses and it does not assume the completeness of the algebra. Our choice is taken because it gives more generality to the results we wanted to obtain, even if in this first work our Horn clauses are defined very basically.

We take as a future task to explore how a Pavelka-style definition of Horn clauses in the framework developed by H\'ajek \cite{Ha98} could change or even improve the results we have obtained on free models. We will follow the broad approach taken in \cite[Ch.8]{CiHaNo11} about fuzzy logics with enriched languages. Finally we will study also quasivarieties over fuzzy logic, and closure properties of fuzzy universal Horn classes by using recent results on direct and reduced products over fuzzy logic like \cite{De12}. Our next objective is to solve the open problem  of characterizing theories of Horn clauses in predicate fuzzy logics, formulated by Cintula and H\'ajek in \cite{CiHa10}.

    \section*{Acknowledgments} We would like to thank the referees for their useful comments. This project has received funding from the European Union's Horizon 2020 research and innovation programme under the Marie Sklodowska-Curie grant agreement No 689176 (SYSMICS project). Pilar Dellunde is also partially supported by the project RASO TIN2015-71799-C2-1-P (MINECO/FEDER) and the grant 2014SGR-118 from the Generalitat de Catalunya.

\end{document}